\newcommand\BibTeX{{\rmfamily B\kern-.05em \textsc{i\kern-.025em b}\kern-.08em
T\kern-.1667em\lower.7ex\hbox{E}\kern-.125emX}}
\newcolumntype{C}{>{\centering\arraybackslash}X}
\newcolumntype{L}{>{\raggedright\arraybackslash}X}
\newcolumntype{R}{>{\raggedleft\arraybackslash}X}
\DeclareMathAlphabet{\mathdutchcal}{U}{dutchcal}{m}{n}
\SetMathAlphabet{\mathdutchcal}{bold}{U}{dutchcal}{b}{n}
\DeclareMathAlphabet{\mathdutchbcal}{U}{dutchcal}{b}{n}
\newcommand{\sA}{\mathcal{A}}
\newcommand{\sK}{\mathcal{K}}
\newcommand{\sG}{\mathcal{G}}
\newcommand{\sV}{\mathcal{V}}
\newcommand{\sR}{\mathcal{R}}
\newcommand{\sC}{\mathcal{C}}
\newcommand{\sP}{\mathcal{P}}
\crefname{constraint}{constraint}{constraints}
\def\expandafter\UrlBreaks\expandafter{\UrlBreaks%
  \do\a\do\b\do\c\do\d\do\e\do\f\do\g\do\h\do\i\do\j%
  \do\k\do\l\do\m\do\n\do\o\do\p\do\q\do\r\do\s\do\t%
  \do\u\do\v\do\w\do\x\do\y\do\z\do\A\do\B\do\C\do\D%
  \do\E\do\F\do\G\do\H\do\I\do\J\do\K\do\L\do\M\do\N%
  \do\O\do\P\do\Q\do\R\do\S\do\T\do\U\do\V\do\W\do\X%
  \do\Y\do\Z}
\DeclareMathOperator*{\argmin}{arg\,min}
\newcommand{\cconv}{c_0}
\newcommand{\ccng}{c_\mathrm{g}}
\newcommand{\cev}{c_\mathrm{e}}
\newcommand{\ilsparam}[1]{\theta^\mathrm{#1}}
\newcommand{\bestsol}{(R^B, \kappa^B)}
\newcommand{\currentsol}{(R, \kappa)}
\newcommand{\tempsol}{(R', \kappa')}
\newtheorem{proposition}{Proposition}
\begin{document}

\title{Joint Routing of Conventional and Range-Extended Electric Vehicles in a Large Metropolitan Network}

\author[1]{Anirudh Subramanyam\thanks{Corresponding author: \texttt{asubramanyam@anl.gov}}}

\author[1,2]{Taner Cokyasar}

\author[1]{Jeffrey Larson}

\author[1]{Monique Stinson}

\affil[1]{Argonne National Laboratory, 9700 S. Cass Ave., Lemont, IL 60439, USA}
\affil[2]{Tarsus University, Takbas mah., Kartaltepe sk., Tarsus, Mersin 33400, Turkey}

\maketitle
\begin{abstract}
    Range-extended electric vehicles combine the higher efficiency and environmental benefits of battery-powered electric motors with the longer mileage and autonomy of conventional internal combustion engines. This combination is particularly advantageous for time-constrained delivery routing in dense urban areas, where battery recharging along routes can be too time-consuming to economically justify the use of all-electric vehicles. However, switching from electric to conventional fossil fuel modes also results in higher costs and emissions and lower efficiency. This paper analyzes this heterogeneous vehicle routing problem and describes two solution methods: an exact branch-price-and-cut algorithm and an iterated tabu search metaheuristic. From a methodological perspective, we find that the exact algorithm consistently obtains tight lower bounds that also serve to certify the metaheuristic solutions as near-optimal. From a policy standpoint, we examine a large-scale real-world case study concerning parcel deliveries in the Chicago metropolitan area and quantify various operational metrics including energy costs and vehicle miles traveled. We find that by deploying roughly 20\% of range-extended vehicles with a modest all-electric range of 33 miles, parcel distributors can save energy costs by up to 17\% while incurring less than 0.5\% increase in vehicle miles traveled. Increasing the range to 60 miles further reduces costs by only 4\%, which can alternatively be achieved by decreasing the average service time by 1~minute or increasing driver working time by 1~hour. Our study reveals several key areas of improvement on which vehicle manufacturers, distributors, and policy makers can focus their attention.
    
    \noindent \textbf{Keywords:} Large-scale vehicle routing, electric vehicles, delivery planning, optimization.
\end{abstract}

\section{Introduction}\label{sec:introduction}
The vehicle routing problem (VRP) and its many variants have been widely studied in the field of transportation science and logistics. %
Yet, the inherent combinatorial complexity of such problems, along with the emergence of new vehicles with distinct characteristics---such as electric vehicles (EVs), plug-in hybrid EVs (PHEVs), autonomous vehicles, and unmanned aerial vehicles---make VRPs challenging to solve. A common characteristic underlying virtually all VRP variants is the determination of optimal routes for a fleet of vehicles 
to serve transportation requests distributed across geographic locations. Although the emergence of new vehicle technologies does not change this requirement, such technologies can introduce additional complicating constraints that require new modeling approaches.

This paper is motivated by the growing federal and private impetus~\citep{biden2021,bogage} for the adoption of range-extended EVs (REEVs) and PHEVs as delivery trucks for urban transportation, including e-commerce and postal operations.
REEVs are hybrid vehicles that extend the limited driving range of battery-electric vehicles (BEVs) using a fuel-based power unit (i.e., an internal combustion engine).
This extended range can be achieved either via an electric generator that charges the battery once it reaches a minimum charge state (``charge-sustaining'' mode) after the vehicle has exhausted its all-electric range (``charge-depleting'' mode) or by constantly supplementing the battery energy (``blended'' mode).
Irrespective of the mode, the crucial point is that because the batteries do not require external recharging, REEVs combine the efficiency and environmental benefits of BEVs with the range and autonomy of conventional vehicles (CVs).
That is, REEVs can alleviate the ``range anxiety'' that can accompany pure BEVs.

Our motivation stems from the potential use of REEVs for distribution logistics. %
The Clean Energy Research Center - Truck Research Utilizing Collaborative Knowledge (CERC TRUCK) is a U.S.~Department of Energy--sponsored consortium between industry and academia that is developing a compressed natural gas (CNG) range extender medium-duty vehicle for the pickup-and-delivery market, with the goal of increasing freight efficiency by 50\% over a 2016 baseline~\citep{cerc}.
Other CNG REEVs will be on the roads shortly: a new Class-6 REEV delivery truck has been developed by Efficient Drivetrains, Inc., that runs on a CNG-series hybrid-electric powertrain~\citep{ferchau,guanetti2016energy,jayanthan}, and 
a Class-8 CNG-powered hybrid-electric commercial truck has been developed by Hyliion Inc.~with planned volume shipments in 2022~\citep{hyliion}.
Considering that CNG is environmentally cleaner and cheaper than both gasoline and diesel~\citep{yadav2019advanced,jgoreham} but has an equally robust refueling infrastructure, replacing conventional parcel delivery trucks with such CNG-based REEVs can simultaneously decrease environmental impacts and operational costs.
In fact, these reductions may be achieved by using diesel-based REEVs as well; for example, it has been shown that hybrid conversion of light-duty diesel delivery trucks can already improve fuel economy by more than 24\%~\citep{byun2021effects}.
While much of the discussion in this manuscript centers on CNG-based REEVs, the developed methods are naturally applicable to REEVs independent of their conventional fuel source.

In this study we investigate the impact of REEVs on large-scale delivery operations in terms of multiple performance metrics, including energy cost, fleet size, vehicle miles traveled (VMT), and vehicle hours traveled (VHT).
We consider various realistic scenarios where a mix of CVs and REEVs are available for delivery operations. 
We define and solve a new VRP variant in these scenarios that simultaneously determines the optimal fleet composition and truck routes under various operational constraints, including limited EV ranges, truck capacities, and driver hours-of-work limits.
This range-extended electric vehicle routing problem (REEVRP) is an instance of the heterogeneous VRP (HVRP), where the goal is to determine the optimal fleet composition and vehicle routes for a heterogeneous fleet of vehicles with different capacities and costs.
Although the REEVRP also features a heterogeneous fleet of CVs and REEVs, what distinguishes the REEVRP from a classical HVRP is the heterogeneity within the REEV vehicle type.
That is, REEVs feature two distinct propulsion modes: a conventional fuel mode and a (limited-range) battery-powered EV mode.\footnote{We use the term ``EV mode'' to refer to either the charge-depleting mode or the mileage-equivalent blended mode, depending on the hybrid vehicle technology.}
The cost of the EV mode (i.e., the electricity cost to charge batteries) is considerably lower than that of the conventional fuel mode (i.e., the cost of CNG or gasoline).
Consequently, the total cost of an REEV route is no longer a linear function of the corresponding travel time or distance, and modeling REEVs as a single vehicle type within a classical HVRP model can lead to a poor approximation of the actual costs.
In addition to this nonlinear cost structure, the REEVRP we consider captures other realistic constraints, including multiple vehicle types, vehicle capacities, driving time limits, and asymmetric travel times and vehicle speeds.

The contributions of this paper are twofold.
First, from a methodological standpoint, we formally define the REEVRP as a new VRP variant and propose two algorithms for its solution: \emph{(i)} an exact branch-price-and-cut (BPC) algorithm based on a novel set partitioning integer programming (IP) formulation and \emph{(ii)} an iterated tabu search (ITS) metaheuristic that can provide solutions for large-scale problem instances with more than 3,000 delivery locations. %
The BPC algorithm consistently certifies the ITS solutions as near-optimal for small- to medium-scale instances with up to 100 locations.
Second, from a practical standpoint, we apply these methods to analyze the impacts of REEVs on the Chicago metropolitan area under various realistic operational scenarios where we vary the proportions of CVs and EVs, EV ranges, hours-of-work limits, service times, and vehicle capacities.
We provide important managerial insights by quantifying systemwide performance metrics under these scenarios to identify the key parameters that can guide policy and decision-making.
We find the following:
\begin{itemize}
    \item Distributors can expect energy cost savings of up to 17\% while incurring less than a 0.5\% increase in VMT and VHT by deploying a small number of REEVs per depot (roughly 20\% of the fleet) each with a modest EV range of 33 miles.
    \item The marginal cost savings diminish rapidly with increasing EV range; for example, doubling the range to 60 miles further reduces costs by only 4\%.
    The reason is partly because delivery routes are time-constrained. Indeed, decreasing the average service time by 1~minute and increasing driver working time by 1~hour can decrease both the energy costs and VMT by roughly 8\% and 4\%, respectively.
    For the same reason, increasing vehicle capacity is unlikely to provide cost or VMT reductions (for the scenarios we consider).
\end{itemize}
These findings along with key operational statistics reveal potential areas of improvement where vehicle manufacturers, logistics service providers, and policy makers can focus their attention.

The outline of the manuscript is as follows. 
\Cref{sec:litrev} provides a comprehensive literature review of similar studies from a methodological perspective. \Cref{sec:description} describes the REEVRP setting, provides underlying assumptions, and briefly mentions data sources utilized. \Cref{sec:model} formulates a mathematical IP model to solve REEVRP instances and describes an exact BPC method. \Cref{sec:heuristic} illustrates the ITS metaheuristic for solving large-scale REEVRP instances. \Cref{sec:experiments} provides case studies of the computational performance of the solution methodologies; these studies also provide managerial insights for various deployment scenarios. \Cref{sec:conclusions} concludes with a discussion of the strengths and weaknesses of our study and provides potential future research directions.

\section{Literature Review}\label{sec:litrev}

The VRP literature is rich; see \citet{TothVigo:2ndEdition} for a broad overview.
The REEVRP belongs to the subclass of VRPs featuring multiple vehicle types, collectively known as heterogeneous VRPs, of which comprehensive reviews can be found in \citet{Hoff2010:industrial_aspects_of_hvrp,Irnich2014,Koc2016:thirty_years_of_HVRP}.
A distinguishing feature of the REEVRP is the use of hybrid or electric vehicle types. VRP variants that incorporate EVs have burgeoned over the past decade; see \citet{pelletier201650th,erdelic2019survey} for recent surveys.
For brevity, we discuss only similarities and differences of our work with the most directly relevant literature.

The unique features of the relevant EV-related works in the literature are the limited battery capacities and driving ranges of pure BEVs.
This limitation requires optimizing over decisions corresponding to recharging station visits in addition to standard routing decisions.
Seminal papers that first considered this additional decision layer include those of \citet{conrad2011recharging,erdougan2012green} and \citet{schneider2014electric}.
Since then, various heuristic and exact solution approaches have been proposed to more realistically model the real-world routing of EVs, including more realistic charging models (e.g., partial recharging) or models where the battery charge state is a nonlinear function of the charging time~\citep{felipe2014heuristic,montoya2016multi,desaulniers2016exact,keskin2016partial,froger2019improved}).
Extensions to heterogeneous fleets of BEVs and CVs have also been studied \citep{juan2014routing,sassi2015vehicle,goeke2015routing,lebeau2015conventional,hiermann2016electric}.

The distinguishing aspect of our work from pure BEV routing problems is the use of range extenders to obviate the need to plan for time-consuming visits to recharging stations.
Pure BEV routing problems where battery recharging is disallowed en route, such as in dense urban environments, have been studied by \citet{pelletier2018charge} and \citet{florio2021routing}.
These papers consider homogeneous fleets of BEVs and include detailed models for battery charge scheduling (at the depot) and also ensure that battery capacity is not exceeded during delivery operations.
However, the absence of alternative propulsion modes in the same vehicle leads to a simple linear cost function for each vehicle, be it the total energy cost or a distance-proportional travel cost.

The literature on REEV and PHEV routing is relatively sparse.
One of the earliest works is that of \citet{abdallah2013}, where the problem is to minimize the cost of using conventional fuels in a homogeneous fleet of PHEVs that are used to serve customers in predefined time windows.
A similar problem is considered by \citet{mancini2017hybrid}, where the  author presents a heuristic approach to route a homogeneous fleet of PHEVs that can switch between EV and conventional fuel modes at any point in the route.  
These models were generalized by \citet{hiermann2019routing}, who consider a heterogeneous fleet composed of pure BEVs, PHEVs, and CVs, with the objective of finding the optimal fleet composition that minimizes the sum of fixed cost (per used vehicle) and energy costs (for electricity and conventional fuels).
All of these studies allow for battery recharging decisions en route.
    Whereas \citet{abdallah2013} allows PHEVs the option to be recharged at potentially every delivery location, \citet{hiermann2019routing}  and  \citet{mancini2017hybrid} allow PHEVs to be recharged only at specific recharging stations.
\citet{vincent2017simulated}  and \citet{li2020svnd} consider a similar variant with homogeneous PHEVs without customer time windows but allow PHEVs to also refuel gasoline (or other conventional fuel) at specific stations.
All of these papers propose metaheuristic solution approaches that are evaluated on synthetic test cases.
In particular, \citet{abdallah2013} proposes a tabu search approach; \citet{hiermann2019routing} design a sophisticated layered metaheuristic consisting of a genetic algorithm and large neighborhood search hybridized with integer programming; \citet{mancini2017hybrid} design a large neighborhood search metaheuristic; and \citet{vincent2017simulated} and \citet{li2020svnd} propose simulated annealing and variable neighborhood descent methods, respectively.
These studies consider instances of up to 100 locations and report solution times on the order of a few hours~\citep{abdallah2013}, 10~minutes~\citep{hiermann2019routing,mancini2017hybrid}, and 1~minute~\citep{vincent2017simulated,li2020svnd}.

In contrast to the aforementioned papers, our work does not consider en route
battery recharging decisions. In other words, we assume that all vehicles
charge only at the depot and that they begin each route with a full charge.
This assumption allows us to design exact methods that can quickly solve
problems with up to 100~locations and scalable metaheuristic approaches that
can approximately solve real-world test cases consisting of more than
3,000~locations in less than an hour.
Along these lines, problems involving hybrid electric vehicles that do not
involve recharging decisions have also been studied
by \citet{doppstadt2016hybrid}. However, they  consider routing only a single
vehicle that must decide which of four different propulsion modes to use
on every single arc of its route, with each mode associated with a different
cost on each arc.
In a similar vein, models that account for the optimal power management of EV and conventional fuel modes---as a function of the travel speed, power requirements, or other arc-level parameters---with varying levels of complexity can be found in the work of \citet{bahrami2020plugin,de2021general}, and \citet{casparia2021optimal}.
Although more accurate, these models can be  complex (involving mixed-integer nonlinear optimization or nested dynamic programs),  often leading to poor scalability on real-world problems.
This level of detail cannot address the scale of networks we consider in our study; instead, we make simplifying assumptions about the cost of using alternative propulsion modes that allow the solution of real-scale instances without losing key managerial and policy insights.

\section{Problem Description}\label{sec:description}
A growing number of logistics service providers (LSPs) are considering the
adoption of REEVs into their distribution fleets to reduce their energy costs
and environmental footprint. LSPs are particularly interested in understanding
the extent of cost savings (if any) under various deployment scenarios. The
magnitude of such savings could be quantified with a reasonably accurate VRP
model that captures the essential real-world features. 

To that end, we focus our attention on the Chicago metropolitan area, where we assume that four large LSPs---Amazon, FedEx, UPS, and USPS---are tasked with delivering packages on a typical day.
We acquire demand and road network data from POLARIS, the Planning and Operations Language for Agent-based Regional Integrated Simulation, developed at Argonne National Laboratory~\citep{auld2016polaris}.
We also identify 53 depot locations of the four LSPs from publicly available sources. 
We assume the LSPs uniformly share the total number of delivery tasks between them (totaling roughly 600,000 customer locations) and that they each solve an assignment problem to determine their depot-to-customer allocations.
\Cref{fig:chicago} illustrates the resulting regional-level problem layout.
Further details of the case study and experimental design are deferred to \Cref{sec:experiments}.

\begin{figure}[!htbp]
    \begin{center}
        \includegraphics[width=0.5\linewidth]{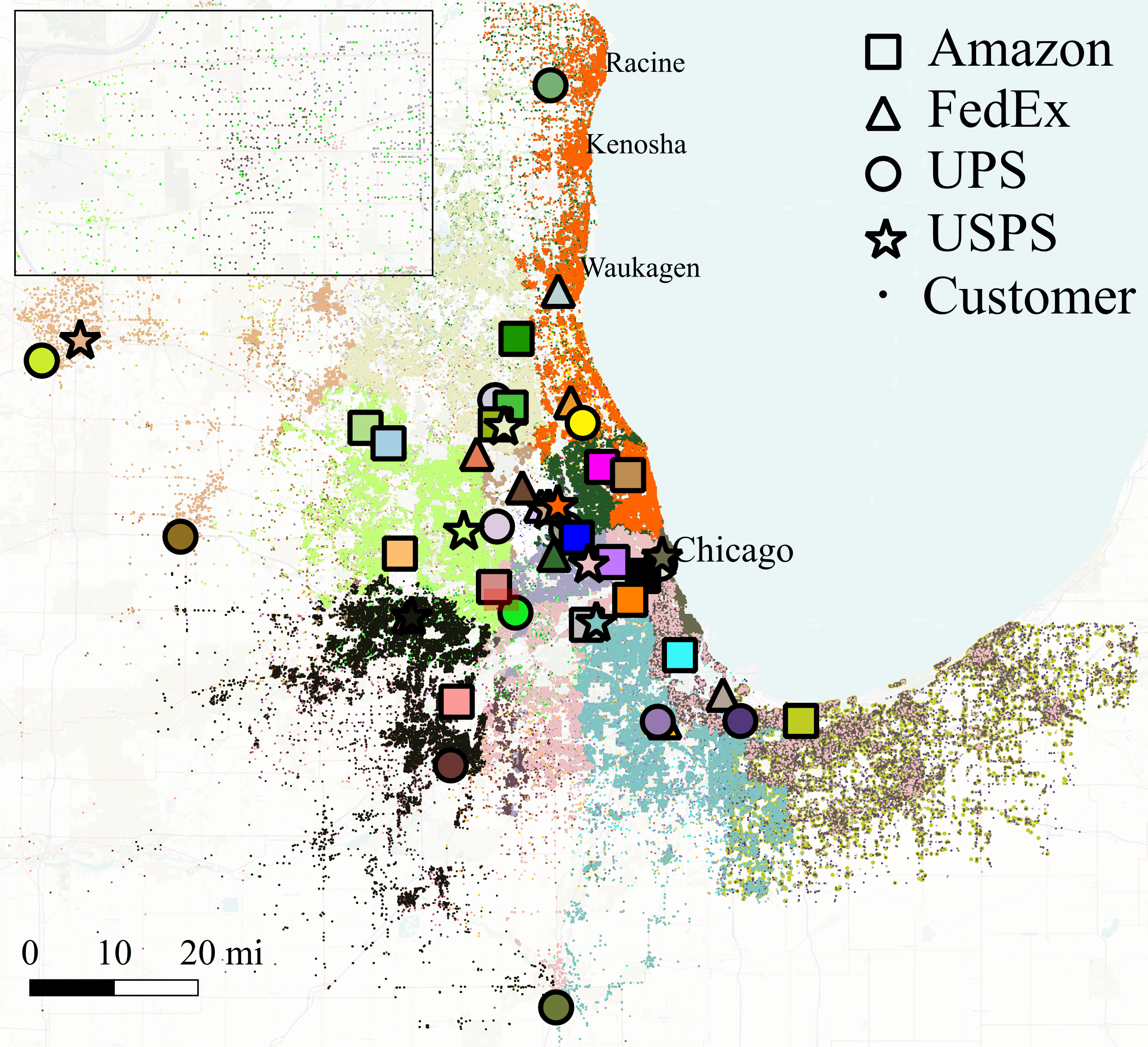}
    \end{center}
    \caption{Illustrative problem layout in the Chicago metropolitan area.
        Customer locations are color-coded to match the colors of their assigned
        depots. Although certain colors appear to dominate the main figure, this is due to overlapping points. The inset shows that customers of all LSPs appear throughout the metropolitan area. \label{fig:chicago}}
\end{figure}

With the regional-level operation decomposed into depot-level operations, we then define and solve the proposed REEVRP for each depot.
Although one can also define and solve a multidepot variant of the REEVRP for each LSP (where the depot-to-customer assignment is part of the optimization), the scale of real-world problems requires a decomposition approach, and one can view the REEVRP as a subproblem as part of this overall decomposition strategy.
Moreover, even if we focus our attention on a single depot, the considered area in \Cref{fig:chicago} already leads to fairly large REEVRP instances.

The goal of each depot-level REEVRP is to solve a combined fleet composition and routing problem.
Each depot must determine the optimal delivery routes for its trucks while simultaneously choosing a truck type (either CV or REEV) for each route.
The number of delivery trucks of each type is limited.
All vehicles are assumed to leave their depot at the beginning of the day, visit customer locations to make deliveries, and then return to their original depot.
Individual arc lengths and their speeds are provided by POLARIS; these values define the average travel times in the network.
Note that we assume the same average speed for all delivery trucks (on a given arc),  although our methodology does not require this assumption.
The REEV trucks are assumed to leave the depot with fully charged batteries and do not stop mid-route to recharge.
We assume that REEVs can optimally switch between CNG and EV modes at any point along their route and this internal power management is decided by some lower-level logic (e.g., see \citet{bahrami2020plugin,chau2017drive}), which is outside the scope of our study. 
Crucially, this implies that their EV range is always used to full capacity (assuming that their routes are longer than the EV range). 
Once the EV range has been traveled (i.e., the battery is depleted), the CNG-powered internal combustion engine (or some other conventional fuel) kicks in and allows the truck to complete the remainder of its service.

The overall objective of each depot-level REEVRP is to minimize the total energy costs.
The energy cost of CVs is readily estimated based on their miles per gallon energy efficiency and the average gallon cost of fuel (diesel in our case),
REEVs incur two types of energy costs depending on the mode (EV or CNG) by which they are powered.
The energy cost in EV mode is estimated by using the average electricity price and electricity consumption in that mode, whereas the energy cost in CNG mode is estimated based on the fuel economy in that mode and the average cost of CNG, similar to CVs.
We do not consider fixed costs per vehicle usage (e.g., to account for ownership, maintenance, or infrastructure-related costs) because of the difficulty in accurately estimating these parameters.
We note, however, that the proposed optimization methods do not preclude us from considering such fixed costs in addition to the energy-based variable costs (e.g., by simply adding them to the depot-incident arc travel costs). 

All delivery trucks are constrained by both time (i.e., work hours) and physical capacity. This means that the total time corresponding to any route (including road travel and service times) cannot exceed a prespecified number of work hours and that the total number of packages 
delivered along any route is capped by an upper limit. 
We note that the time spent for parking trucks, unloading packages, and physically making deliveries are all included in the service time.
For simplicity, we assume that all trucks have the same physical carrying capacity and work hours; as before, our models can be readily extended to lift these assumptions. 
In the following sections we describe the problem formally and provide solution methods to a general form of the problem.
We emphasize that the proposed methodology applies to any mixed REEV/CV fleet and is not tied to our motivating example of CNG-powered REEVs. 

\section{Mathematical Model and Exact Branch-Price-and-Cut Algorithm}\label{sec:model}

We first provide a precise definition of the REEVRP in \Cref{sec:notation}, then present our IP formulation of the problem in \Cref{sec:mip},
and describe its solution via a BPC algorithm in \Cref{sec:bpc}.
For ease of reference for the reader, \Cref{table:notation} provides a centralized summary of the important notation that is formally defined throughout the paper.

\begin{table}[!htb]
    \scriptsize
    \caption{Notation used in the mathematical model.}
    \label{table:notation}
    \begin{tabularx}{\linewidth}{lX}
        \toprule
        \textbf{Set} & \textbf{Description}\\
        \midrule
        $\sG$ & Directed graph representation of the road network \\ %
        $\sV$ & Set of depot and delivery nodes in $\sG$ \\ %
        $\sA$ & Set of directed arcs in $\sG$ \\
        $\sV_C$ & Set of delivery nodes in $\sG$ \\
        $\sK$ & Set of vehicle types \\ %
        $\sK^{+}$ & Set of vehicle types used in set partitioning model \\
        $\sP$ & Set of paths in $\sG$ that start at the origin depot $0$ and end at the destination depot $n+1$ \\
        $\sR$ & Set of all feasible elementary routes in $\sG$ \\
        $\sR^{ng}$ & Set of all feasible $ng$-routes in $\sG$ \\
        $NG_i$, $i \in \sV_C$ & $ng$-set of node $i$ \\
        $\sC_p$, $p \in \sP$ & Set of arcs in the transitive closure of (the subgraph induced by) path $p$ \\
        \midrule
        \textbf{Parameter} & \textbf{Description}\\
        \midrule
        $q_{i}$, $i \in \sV$ & Demand of node $i$ \\
        $s_{i}$, $i \in \sV$ & Service time of node $i$ \\
        $t_{ij}$, $(i, j) \in \sA$ & Travel time along arc $(i, j)$ \\
        $d_{ij}$, $(i, j) \in \sA$ & Travel distance along arc $(i, j)$ \\
        $m_k$, $k \in \sK$ & Number of available vehicles of type $k$ \\
        $Q$ & Demand carrying capacity of each vehicle \\
        $T$ & Maximum working time (hours-of-work) before which vehicles must return to the depot \\
        $D_E$ & EV mode mileage of REEVs (EV range) \\
        $\cev$ & Mileage cost of REEVs in EV mode \\
        $\ccng$ & Mileage cost of REEVs in non-EV mode \\
        $\cconv$ & Mileage cost of CVs \\
        $\alpha_{ip}$, $i \in \sV$, $p \in \sP$ & Total number of times node $i$ is visited on path $p$ \\
        $\beta_{ijp}$, $(i, j) \in \sA$, $p \in \sP$ & Total number of times arc $(i, j)$ appears on path $p$ \\
        $\eta_p$, $p \in \sP$ & Total demand of path $p$ \\
        $\delta_p$, $p \in \sP$ & Total distance of path $p$ \\
        $\tau_p$, $p \in \sP$ & Total processing time (sum of travel and service times) of path $p$ \\
        $c_{pk}$, $p \in \sP$, $k \in \sK$ & Cost incurred when path $p$ is performed by a vehicle of type $k$ \\
        \midrule
        \textbf{Decision Variable} & \textbf{Description}\\
        \midrule
        $\lambda_{rk}$, $r \in \sR^{ng}$, $k \in \sK^{+}$ & Binary variable indicating if route $r$ is performed by a vehicle of type $k$ \\
        \bottomrule
    \end{tabularx}
\end{table}

\subsection{Problem Definition}\label{sec:notation}

The REEVRP is modeled on a directed graph $\sG = (\sV, \sA)$ with nodes $\sV = \{0, 1, \ldots, n, n+1\}$ and arcs $\sA \subseteq \big\{(i, j) \in \sV \times \sV: i \neq j, (i, j) \neq (0, n+1) \big\}$.\footnote{Note that $\sG$ does not necessarily represent the real-world road network. It is generally constructed by transforming the road network graph into a (possibly complete) graph whose nodes are the vertices of the road network corresponding only to the depot and customers.}
Here, nodes $0$ and $n+1$ respectively represent the origin and destination depots of vehicle routes but refer to identical physical locations.
We use $\sV_C = \sV \setminus \{0, n+1\}$ to denote the subset of nodes where deliveries must be made.
Each node $i \in \sV$ is associated with
a demand $q_i > 0$
and a service time $s_i \geq 0$.
Without loss of generality,= we assume that $q_0 = q_{n + 1} = 0$ and $s_0 = s_{n + 1} = 0$.
The travel time along arc $(i, j) \in \sA$ and its distance are denoted by $t_{ij} > 0$ and $d_{ij} > 0$, respectively.
We  assume throughout that the arc lengths satisfy the triangle inequality:
$d_{ij} + d_{jk} \geq d_{ik}$ for all $(i, j), (j, k), (i, k) \in \sA$.

We let $\sK = \{k_h, k_c\}$ denote the set of vehicle types, where $k_h$ and $k_c$ stand for hybrid REEV and CV types, respectively.
There are $m_k \in \mathbb{N}$ vehicles available of type $k \in \sK$, each with a carrying capacity of $Q > 0$ demand units and a maximum allowed working time of $T > 0$ units.
We assume that all REEVs start from the depot with fully charged batteries
and are not allowed to stop en route to recharge them.
The per-mile cost of operating a CV is denoted by $\cconv$, whereas the per-mile costs of operating an REEV in EV and non-EV (i.e., in internal combustion engine or charge-sustaining) modes are denoted by $\cev$ and $\ccng$, respectively.
Throughout, we  assume that $\cev < \ccng \leq \cconv$,
reflecting the relatively higher costs of fossil fuels (gasoline or CNG) compared with battery costs in EV mode (e.g., electricity for charging).
The maximum distance that an REEV can travel in EV mode (also known as its all-electric range) is constrained by $D_E \geq 0$.

We let $\sP$ denote the set of all finite and open paths in $G$ that start at the origin depot $0$, traverse through a subset of $\sV_C$, and end at the destination depot $n+1$.
Specifically, each path $p = (v_0 = 0, v_1, v_2, \ldots, v_l, v_{l+1} = n + 1) \in \sP$ satisfies $l > 0$, $\sA_p \coloneqq \{(v_h, v_{h+1}): h \in \{0, 1, \ldots, l\} \} \subseteq \sA$ and $v_i \in \sV_C$ for $i \in  \{1, \ldots, l\}$.
We use
$\eta_p = \sum_{i=1}^{l} q_{v_i}$ to denote the total demand along $p$,
$\delta_p = \sum_{i=0}^{l} d_{v_i v_{i+1}}$ to denote the travel distance along $p$ and
$\tau_p = \sum_{i=0}^{l} \left(t_{v_i v_{i+1}} + s_{v_i}\right)$ to denote the total duration along $p$ (that is, the total time to process all nodes in $p$).
We use the notation $i \in p$ to indicate that node~$i$ belongs to path~$p$; that is, $i = v_j$ for some $j \in \{0, 1, \ldots, l, l+1\}$. %

If a path $p \in \sP$ is traversed by a vehicle of type $k \in \sK$, then the cost incurred is given as follows:
\begin{equation}\label{eq:cost_definition}
    c_{pk} = \begin{dcases}
        \cev \min\{\delta_p, D_E\} + \ccng \max\{\delta_p - D_E, 0\} & \text{ if } k = k_h \\
        \cconv \delta_p & \text{ if } k = k_c.
    \end{dcases}
\end{equation}
The first term in the first condition on the right-hand side states that REEVs ($k = k_h$) incur a mileage cost of $\cev$ in EV mode but that the total distance  they can travel in this mode is restricted by their EV range, $D_E$.
The second term enforces a mileage cost of $\ccng$ in non-EV mode for any travel distance that exceeds $D_E$.
The second condition on the right-hand side of \cref{eq:cost_definition} states that CVs ($k = k_c$) always incur a per-mile cost of $\cconv$.

A path that is elementary and both time- and capacity-feasible is also called a vehicle route (or simply a route),
and we use $\sR \subseteq \sP$ to denote the set of all such routes.
Specifically, each route $r = (v_0 = 0, v_1, v_2, \ldots, v_l, v_{l+1} = n + 1) \in \sR$ satisfies the following conditions:
\begin{enumerate}[label=(C\arabic*),leftmargin=*]
    \item\label{condition:elementary} $r$ is elementary: $v_i \neq v_j$ for $i, j \in \{1, \ldots, l\}$, $i \neq j$.
    \item\label{condition:capacity} $r$ satisfies the capacity limit: $\eta_r \leq Q$.
    \item\label{condition:duration} $r$ satisfies the duration limit: $\tau_r \leq T$.
\end{enumerate}

A vector of $F$ vehicle routes $R = (r_1, \ldots, r_F)$ in conjunction with a fleet composition vector $\kappa = (\kappa_1, \ldots, \kappa_F)$, where $r_f \in \sR$, $\kappa_f \in \sK$ 
for $f \in \{1, \ldots, F\}$,
is said to define a feasible solution of the REEVRP
if the following are satisfied:
\begin{enumerate}[label=(D\arabic*),leftmargin=*]
    \item\label{condition:partition}
    Routes $r_1, \ldots, r_F$ partition $V_C$; that is, each customer node is visited on exactly one route.
    \item\label{condition:fleet-size}
    The number of routes served by vehicles of type $k$ does not exceed their available number; that is
    $\sum_{f = 1}^F \mathbb{I}[\kappa_f = k] \leq m_k$ for all $k \in \sK$,
    where $\mathbb{I}[\mathcal{E}]$ is the indicator function that evaluates to $1$ if expression $\mathcal{E}$ is true and $0$ otherwise.
\end{enumerate}
The cost of a feasible solution is the sum of the costs of its individual routes, $c(R, \kappa) \coloneqq \sum_{f = 1}^F c_{r_f \kappa_f}$,
and the goal is to determine a feasible solution of minimum cost.

\subsection{Set Partitioning Model}\label{sec:mip}
The central challenge in solving the REEVRP is to model the nonlinear costs in~\cref{eq:cost_definition} incurred by REEVs using a tractable linear integer programming formulation.
For this purpose we first define an extended set of vehicle types $\sK^{+} \coloneqq \{k_e, k_g, k_c\}$.
Here the vehicle type $k_c$ refers to CV types as before.
However, the hybrid REEV type $k_h$ is represented by using two new types that we denote as $k_e$ and $k_g$.
The subtype $k_e$ represents vehicles that operate purely in EV mode at all times; that is, their total individual travel distances do not exceed $D_E$ units.
The subtype $k_g$ represents vehicles that necessarily switch to non-EV mode at some point; that is, their total individual travel distances exceed $D_E$ units.
As we will show, this partition enables a linear cost representation that can be modeled in an integer linear program.

Our set partitioning model uses binary variables $\lambda_{rk}$ that indicate whether a route $r$ is used in the optimal solution and whether it is performed by a vehicle of type $k \in \sK^{+}$.
It is well known (e.g., see \citet{poggi2014}) that one does not sacrifice optimality if these variables are defined over routes $r$ that are not necessarily elementary, that is, those that may not satisfy condition~\ref{condition:elementary}.
In particular, we define these variables over the set $\sR^{ng} \subseteq \sP$ of feasible $ng$-routes (e.g., see \citet{baldacci2011new}), which are defined as follows.
For each $i \in \sV_C$, let $NG_i \subseteq \sV_C$ denote the $ng$-set of node~$i$.
This set may be defined to consist of any subset of $\sV_C$, with the only restriction being that $i \in NG_i$.
It is common to define $NG_i$ to consist of the $\Delta$-nearest neighbors of $i$, with respect to some metric such as the one induced by the distance matrix (usually $\Delta \in \{8, 9, \ldots, 16\}$).
Then, each route $r = (v_0 = 0, v_1, v_2, \ldots, v_l, v_{l+1} = n + 1) \in
\sR^{ng}$ satisfies conditions \ref{condition:ng-elementary},
\ref{condition:capacity} and \ref{condition:duration}, where the elementarity
condition is relaxed as follows:
\begin{enumerate}[label=(C\arabic*$'$),leftmargin=*]
    \item\label{condition:ng-elementary}
    $r$ is $ng$-elementary: if $v_i = v_j$ for some $i, j \in \{1, \ldots, l\}$, $i < j$, then there exists $v_k$ for some $k \in \{i+1, i+2, \ldots, j-1\}$ such that $v_i \notin NG_{v_k}$.
\end{enumerate}

We can now present our IP model.
In the following formulation,
for each path $p = (v_0 = 0, v_1, v_2, \ldots, v_l, v_{l+1} = n+1) \in \sP$,
integer-valued parameters
$\alpha_{ip}$ and $\beta_{ijp}$ count the number of times node~$i$ and arc~$(i, j)$, respectively, appear on a (not necessarily elementary) path $p \in \sP$;
whereas $\sC_p = \{(v_i, v_j) \in \sA: i, j \in \{0, 1, \ldots, l, l+1\}, i < j\}$ denotes the set of arcs in the transitive closure of (the subgraph induced by) $p$.
\begin{subequations}\label{eq:model}
    \begin{align}
        \mathop{\text{minimize}}_{\lambda} \;\; &
        \sum_{r \in \sR^{ng}} \left(
        \cev \delta_r \lambda_{rk_e}
        +\left[\ccng \delta_r - (\ccng - \cev)D_E \right] \lambda_{rk_g}
        +\cconv \delta_r \lambda_{rk_c}
        \right) \\
        \text{subject to} \;
        & \sum_{k \in \sK^{+}}\sum_{r \in \sR^{ng}} \alpha_{ir} \lambda_{rk} = 1, \;\; i \in \sV_C \label[constraint]{eq:degrees} \\
        & \sum_{r \in \sR^{ng}} \left(\lambda_{rk_e} + \lambda_{rk_g}\right) \leq m_{k_h} \label[constraint]{eq:fleet-size-hybrid} \\
        & \sum_{r \in \sR^{ng}} \lambda_{rk_c} \leq m_{k_c} \label[constraint]{eq:fleet-size-conventional} \\
        & \sum_{(i, j) \in \sC_p}\sum_{r \in \sR^{ng}} \beta_{ijr} \lambda_{rk_g} \leq |\sA_p| - 1, \;\; p \in \sR: \delta_p \leq D_E \label[constraint]{eq:ipec} \\
        & \lambda_{rk_e} = 0, \;\; r \in \sR^{ng}: \delta_r > D_E %
        \label[constraint]{eq:ev} \\
        & \lambda_{rk} \in \{0, 1\}, \;\; r \in \sR^{ng}, \; k \in \sK^{+} \label[constraint]{eq:binarity}
    \end{align}
\end{subequations}
In this formulation, \cref{eq:degrees} ensures that each customer is visited exactly once, whereas
\crefrange{eq:fleet-size-hybrid}{eq:fleet-size-conventional} ensure that the fleet size is respected.
Next, \cref{eq:ipec} is an \emph{infeasible path elimination constraint} (IPEC), which enforces that hybrid REEVs of subtype $k_g$ are not assigned to routes whose travel distances are less than or equal to $D_E$ units (a formal proof is provided shortly).
Its counterpart \cref{eq:ev} ensures that hybrid vehicles of subtype $k_e$ are not assigned to routes whose travel distances exceed $D_E$ units.
 \cref{eq:binarity} requires that the variables  be binary.

The following propositions establish the validity of formulation \eqref{eq:model}.
\begin{proposition}
    Every feasible solution of the REEVRP can be mapped to a feasible solution of formulation~\eqref{eq:model} with the same cost.
\end{proposition}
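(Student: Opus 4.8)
The plan is to produce an explicit map from REEVRP solutions to $\lambda$-vectors and then verify feasibility and cost preservation for formulation~\eqref{eq:model} one constraint at a time, isolating the infeasible-path-elimination constraint \cref{eq:ipec} as the single nontrivial check. Given a feasible solution $(R,\kappa)$ with $R=(r_1,\dots,r_F)$, I first observe that each $r_f$ is elementary and hence trivially $ng$-elementary (condition~\ref{condition:ng-elementary} is vacuous when no node repeats), so $r_f\in\sR^{ng}$. I then set $\lambda_{r_f k_c}=1$ whenever $\kappa_f=k_c$, and for $\kappa_f=k_h$ I split by distance, setting $\lambda_{r_f k_e}=1$ if $\delta_{r_f}\le D_E$ and $\lambda_{r_f k_g}=1$ if $\delta_{r_f}>D_E$; all remaining components are $0$. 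Because the routes partition $\sV_C$ (condition~\ref{condition:partition}), they are pairwise distinct, so no variable is assigned conflicting values.

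The routine checks come next. The degree \cref{eq:degrees} holds because each customer lies on exactly one elementary route, so $\alpha_{i r_f}=1$ for its unique route and $0$ otherwise. The fleet \cref{eq:fleet-size-hybrid,eq:fleet-size-conventional} follow since the number of $k_e$- plus $k_g$-routes equals the number of hybrid routes and the number of $k_c$-routes equals the number of conventional routes, each bounded by condition~\ref{condition:fleet-size}. Constraint \cref{eq:ev} holds by construction, and \cref{eq:binarity} is immediate. Cost preservation is verified term by term against \cref{eq:cost_definition}: for $k_c$ both sides equal $\cconv\delta_{r_f}$; for a short hybrid route the $k_e$ term $\cev\delta_{r_f}$ equals $\cev\min\{\delta_{r_f},D_E\}$; and for a long hybrid route the $k_g$ term $\ccng\delta_{r_f}-(\ccng-\cev)D_E$ rearranges to $\cev D_E+\ccng(\delta_{r_f}-D_E)$, which matches the hybrid cost precisely because $\delta_{r_f}>D_E$.

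The crux is \cref{eq:ipec}, and this is where I expect the real work to lie. Fix a path $p\in\sR$ with $\ell$ customers and $\delta_p\le D_E$, so $|\sA_p|-1=\ell$. Since our routes are elementary, $\beta_{ijr}\in\{0,1\}$, and each arc of $\sC_p$ belongs to at most one route (a customer-to-customer arc lives in the unique route containing both endpoints, while a depot-incident arc $(0,b)$ or $(a,n+1)$ identifies $b$/$a$ as that route's unique first/last customer). Hence the left-hand side of \cref{eq:ipec} equals the number $|A^\ast|$ of distinct transitive-closure arcs of $p$ traversed by the $k_g$-routes. For a single $k_g$-route $r$, the number of its arcs lying in $\sC_p$ is at most $\mu_r+1$, where $\mu_r$ is the number of customers of $p$ that $r$ visits, because the $p$-nodes of $r$ (including $0$ and $n+1$) contribute at most one forward arc per consecutive pair in their $r$-ordering.

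The main obstacle—and the only place the triangle inequality enters—is ruling out the equality $\mu_r+1$. I would show that equality forces every such consecutive pair to be both adjacent in $r$ and forward in $p$, which means $r$ has no other nodes and visits its $p$-customers in $p$-order; that is, the node sequence of $r$ is a subsequence of that of $p$. In that case repeated application of $d_{ij}+d_{jk}\ge d_{ik}$ gives $\delta_r\le\delta_p\le D_E$, contradicting $\delta_r>D_E$ for a $k_g$-route. Consequently each $k_g$-route contributes at most $\mu_r$ arcs, and summing over the customer-disjoint $k_g$-routes yields $|A^\ast|\le\sum_r\mu_r\le\ell=|\sA_p|-1$, establishing \cref{eq:ipec}. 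The delicate bookkeeping is confined to separating depot-incident from customer-to-customer arcs and to the subsequence characterization of the equality case; everything else is mechanical.
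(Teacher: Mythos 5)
Your construction of $\lambda$, the routine feasibility checks, and the term-by-term cost verification coincide with the paper's proof, and you correctly identify \cref{eq:ipec} as the only nontrivial step; the key lemma is also the same in both arguments, namely that the triangle inequality forbids a $k_g$-route (which must have $\delta_r > D_E$) from being embedded in the transitive closure $\sC_p$ of a route $p$ with $\delta_p \le D_E$. Where you genuinely differ is in how the counting is organized. The paper argues globally: it defines flow variables $x_{ij}$ supported on $\sC_p$, uses the degree and elementarity conditions to show each customer of $p$ has in- and out-degree at most one in the induced subgraph, grows the $m$ subpaths emanating from node $0$, shows via the triangle inequality that none of them can terminate at $n+1$, and concludes with the bound $m + (\ell - m) = |\sA_p| - 1$. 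You instead decompose per route: each $k_g$-route $r$ contributes at most $\mu_r + 1$ arcs to $\sC_p$ by the contiguous-block count on its $\mu_r + 2$ in-$p$ nodes, equality is excluded because it would force $r$ to be an order-preserving subsequence of $p$ (hence $\delta_r \le \delta_p \le D_E$, a contradiction), and customer-disjointness of the routes from condition~\ref{condition:partition} gives $\sum_r \mu_r \le \ell$. Your version avoids the flow/degree machinery and makes the role of the partition condition more transparent; the paper's version has the side benefit that the same flow construction is reused verbatim in its polynomial-time separation procedure for \cref{eq:ipec}. Your argument is complete and correct, including the subtle points (the endpoints $0$ and $n+1$ forcing the contiguous block to be all of $r$, and backward arcs being excluded from $\sC_p$).
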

\begin{proof}
    Let $(R, \kappa)$ be a feasible solution of the REEVRP
    where $R = (r_1, \ldots, r_F)$ and $\kappa = (\kappa_1, \ldots, \kappa_F)$ with $r_f \in \sR$ and $\kappa_f \in \sK$ for each $f \in \{1, \ldots, F\}$.
    By definition, $(R,\kappa)$ satisfies conditions \ref{condition:elementary}, \ref{condition:capacity}, \ref{condition:duration}, \ref{condition:partition}, and~\ref{condition:fleet-size}.
    Construct $\lambda$ as follows:
    For each $f \in \{1, \ldots, F\}$, define
    \begin{equation*}
        \lambda_{r_f k_c} = \mathbb{I}[\kappa_f = k_c], \quad
        \lambda_{r_f k_e} = \mathbb{I}[\kappa_f = k_h] \mathbb{I}[\delta_{r_f} \leq D_E], \quad
        \lambda_{r_f k_g} = \mathbb{I}[\kappa_f = k_h] \mathbb{I}[\delta_{r_f} > D_E].
    \end{equation*}
    Also, define $\lambda_{rk} = 0$ for all $r \in \sR^{ng} \setminus \{r_1, \ldots, r_F\}$ and all $k \in \sK^{+}$.
    Now observe that $\lambda$ satisfies \cref{eq:degrees} because $R$ satisfies condition~\ref{condition:partition} and because $\alpha_{ir_f} = \mathbb{I}[i \in r_f]$ (since each $r_f$ is elementary by definition~\ref{condition:elementary});
    it also satisfies \crefrange{eq:fleet-size-hybrid}{eq:fleet-size-conventional} because $R$ satisfies condition~\ref{condition:fleet-size};
    and similarly it satisfies \crefrange{eq:ev}{eq:binarity} by construction of $\lambda$.
    
    To see that \cref{eq:ipec} is also satisfied, fix any route
    $p = (v_0 = 0, v_1, \ldots, v_\ell, v_{\ell + 1} = n + 1) \in \sR$
    such that $\delta_p \leq D_E$, and define the flow variables:
    \begin{equation*}
        x_{ij} = \begin{dcases}
            \sum_{r \in \sR^{ng}} \beta_{ijr} \lambda_{r k_g} & \text{ if } (i, j) \in \sC_p \\
            0 & \text{ otherwise}
        \end{dcases} \;\; \text{ for all } (i, j) \in \sV \times \sV.
    \end{equation*}
    By construction of $\lambda$ and because of the elementarity
    \ref{condition:elementary} and degree \ref{condition:partition} conditions,
    observe that $x_{ij} \in \{0, 1\}$ and $x_{ij} = 1$ if and only if arc $(i,
    j)$ is part of some route $r \in \sR$ such that $\delta_r > D_E$ and it is performed
    by a vehicle of type $k_h$.
    Also, for any $i \in \sV_C$, since the definition of $\alpha$ and $\beta$ implies that
    \[
    \alpha_{ir} = \sum_{j \in \sV_C \cup \{n+1\}: (i, j) \in \sA} \beta_{ijr} = \sum_{j \in \sV_C \cup \{0\}: (j, i) \in \sA} \beta_{jir},
    \]
    the degree condition \ref{condition:partition} requires that $\sum_{j \in \sV} x_{ij} \leq 1$ and $\sum_{j \in \sV} x_{ji} \leq 1$.
    Along with $x_{ij} \in \{0, 1\}$, this implies that in the subgraph $G_x = \{\cup_{i=0}^{\ell + 1} \{v_i\}, \sA_x\}$ induced by $\sA_x \coloneqq \{(i, j) \in \sA : x_{ij} = 1 \}$, each of the nodes $v_1, v_2, \ldots, v_{\ell}$ has out-degree and in-degree equal to $0$ or $1$.
    We now claim that the terminal nodes $u_1, u_2, \ldots, u_m \in p$ of each of the $m \coloneqq \sum_{j=1}^\ell x_{0 v_j}$ %
    subpaths in $G_x$ starting from node $0$ satisfy $u_i \neq n+1$. %
    Indeed, if some $u_i = n+1$, then this implies that there is a path $\pi \in \sP$ for which $\lambda_{\pi k_g} = 1$ (by construction of $\lambda$ and $x$), and hence $\delta_\pi > D_E$.
    But since the travel distances satisfy the triangle inequality and $\pi$ is contained in the transitive closure $\sC_p$ of $p$, we also must have
    $\delta_\pi \leq \delta_p \leq D_E$, resulting in a contradiction.
    Therefore, $u_1, u_2, \ldots, u_m \in \sV_C$, and we have
    \begin{align*}
        \sum_{(i, j) \in \sC_p} x_{ij} = \sum_{j=1}^\ell x_{0 v_j} + \sum_{i=1}^\ell \sum_{j=i+1}^{\ell+1} x_{v_i v_j}
        &= m + \sum_{v \in \{v_1, \ldots, v_\ell\} \setminus \{u_1, \ldots, u_m\}} \sum_{j:(v, j) \in \sC_p} x_{vj} \\
        &\leq m + \sum_{v \in \{v_1, \ldots, v_\ell\} \setminus \{u_1, \ldots, u_m\}} \sum_{j \in V} x_{vj} \\
        &\leq m + (\ell - m)  \;\; \bigg(\text{since } \sum_{j \in \sV} x_{ij} \leq 1 \bigg)\\
        &= |\sA_p| - 1,
    \end{align*}
    showing that \cref{eq:ipec} is also satisfied.
    
    The objective value of the constructed $\lambda$ solution is equal to $\sum_{f = 1}^F \mathrm{cost}_f$, where
    \begin{align*}
        \mathrm{cost}_f &= \begin{dcases}
            \cev \delta_{r_f} \mathbb{I}[\delta_{r_f} \leq D_E]
            +\left[\ccng \delta_{r_f} - (\ccng - \cev)D_E \right] \mathbb{I}[\delta_{r_f} > D_E]
            & \text{ if } \kappa_f = k_h \\
            \cconv \delta_{r_f} & \text{ if } \kappa_f = k_c
        \end{dcases} \\
        &= \begin{dcases}
            \cev \min\{\delta_{r_f}, D_E\} + \ccng \max\{\delta_{r_f} - D_E, 0\}
            & \text{ if } \kappa_f = k_h \\
            \cconv \delta_{r_f} & \text{ if } \kappa_f = k_c
        \end{dcases} \\
        &= c_{r_f \kappa_f},
    \end{align*}
    where the first equality follows by construction of $\lambda$ and the second follows by directly comparing both sides of the equality separately in the cases $\delta_{r_f} \leq D_E$ and $\delta_{r_f} > D_E$.
\end{proof}

\begin{proposition}
    Every feasible solution of formulation~\eqref{eq:model} can be mapped to a feasible solution of the REEVRP with the same cost.
\end{proposition}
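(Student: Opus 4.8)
The plan is to invert the construction of the previous proposition: starting from any feasible $\lambda$ of \eqref{eq:model}, I would read off the REEVRP solution directly. I collect the active route--type pairs $\{(r,k) : \lambda_{rk} = 1\}$, let $R$ list the associated routes $r$, and set the corresponding entry of $\kappa$ to $k_c$ when $k = k_c$ and to $k_h$ when $k \in \{k_e, k_g\}$. It then remains to verify conditions \ref{condition:elementary}, \ref{condition:capacity}, \ref{condition:duration}, \ref{condition:partition}, and~\ref{condition:fleet-size}, and finally to check that the costs agree.

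First I would show that every active route is elementary and that the active routes partition $\sV_C$, which is the step that makes the $ng$-relaxation harmless here. Because each $\alpha_{ir}$ is a nonnegative integer and \cref{eq:degrees} holds with equality, any active route $r$ must satisfy $\alpha_{ir} \le 1$ for every $i \in \sV_C$ (an entry $\alpha_{ir} \ge 2$ would by itself push the left-hand side of \cref{eq:degrees} above $1$); hence $r$ visits no customer twice and is elementary, giving~\ref{condition:elementary}. The same equality then yields $\sum_{\text{active } r} \mathbb{I}[i \in r] = 1$ for each $i \in \sV_C$, which is exactly~\ref{condition:partition}. Conditions~\ref{condition:capacity} and~\ref{condition:duration} are inherited because every $r \in \sR^{ng}$ already satisfies them, so each active route genuinely lies in $\sR$; and~\ref{condition:fleet-size} follows by summing the definition of $\kappa$ against \cref{eq:fleet-size-hybrid,eq:fleet-size-conventional}. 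This establishes feasibility of $(R, \kappa)$, leaving only the cost comparison.

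The cost check is term by term, and the only nontrivial case is the main obstacle. For an active $k_c$ route the objective contribution $\cconv \delta_r$ equals $c_{r k_c}$ outright. For an active $k_e$ route, \cref{eq:ev} forces $\delta_r \le D_E$, so $c_{r k_h} = \cev \min\{\delta_r, D_E\} + \ccng \max\{\delta_r - D_E, 0\} = \cev \delta_r$, again matching the objective. For an active $k_g$ route the objective contribution $\ccng \delta_r - (\ccng - \cev) D_E$ equals $c_{r k_h}$ \emph{only} when $\delta_r > D_E$, so I must prove that $\lambda_{r k_g} = 1$ implies $\delta_r > D_E$. This is precisely the role of the infeasible path elimination constraint \cref{eq:ipec}, and verifying it is the crux of the argument.

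To close this gap I would argue by contradiction, mirroring the IPEC computation in the previous proposition. Suppose an active $k_g$ route $r$ had $\delta_r \le D_E$. Since $r$ is elementary (by the first step) and respects the capacity and duration limits, it qualifies as a choice $p = r$ in \cref{eq:ipec}. Its own contribution to the left-hand side is $\sum_{(i,j) \in \sC_r} \beta_{ijr} \lambda_{r k_g} = \sum_{(i,j) \in \sC_r} \beta_{ijr} = |\sA_r|$, because the $|\sA_r|$ genuine arcs of the elementary route each occur once while every shortcut arc of the transitive closure $\sC_r$ has $\beta_{ijr} = 0$. As all remaining terms are nonnegative, the left-hand side is at least $|\sA_r| > |\sA_r| - 1$, violating \cref{eq:ipec}. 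Hence $\delta_r > D_E$ for every active $k_g$ route, the costs match in all three cases, and the proof concludes. The two delicate points are confirming the free elementarity of active routes and executing this IPEC contradiction; everything else is bookkeeping against the partition and fleet-size constraints.
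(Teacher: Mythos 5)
Your proposal is correct and follows essentially the same route as the paper's proof: the same reading-off of $(R,\kappa)$ from the active $\lambda$ entries, elementarity and the partition from \cref{eq:degrees}, capacity/duration from membership in $\sR^{ng}$, fleet sizes from \cref{eq:fleet-size-hybrid,eq:fleet-size-conventional}, and the three-way cost check using \cref{eq:ev} for $k_e$ routes and the IPEC \cref{eq:ipec} with $p=r$ for $k_g$ routes. The only difference is presentational --- you make explicit the computation showing an active $k_g$ route with $\delta_r \le D_E$ contributes $|\sA_r|$ to the left-hand side of \cref{eq:ipec}, which the paper leaves implicit.
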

\begin{proof}
    Suppose that $\lambda$ is a feasible solution of formulation~\eqref{eq:model}.
    Construct $(R, \kappa)$ as follows:
    \emph{(i)} $F \gets 0$;
    \emph{(ii)} for all $r \in \sR^{ng}$, if $\sum_{k \in \sK^{+}} \lambda_{r k} > 0$, then set $F \gets F + 1$, $r_f \gets r$, and $\kappa_f \gets k_c$ if $\lambda_{r k_c} = 1$ and $\kappa_f \gets k_h$ otherwise.
    We claim that $(R, \kappa)$ is a feasible solution of the REEVRP with the same cost as the objective value of $\lambda$.
    First, observe that all individual routes $r_f$ satisfy conditions \ref{condition:capacity} and \ref{condition:duration} by definition of their domain $\sR^{ng}$.
    Second, the degree \cref{eq:degrees} implies that these routes also satisfy condition \ref{condition:elementary}, and hence the solution $(R, \kappa)$ satisfies condition \ref{condition:partition}.
    Third,
    the fleet size \crefrange{eq:fleet-size-hybrid}{eq:fleet-size-conventional} imply that condition \ref{condition:fleet-size} is also satisfied by $(R, \kappa)$.
    To see that its cost is the same as the objective value of $\lambda$, consider any route $r_f$.
    We consider three possibilities.
    If $\kappa_f = k_c$, then $\lambda_{r_f k_c} = 1$ (by construction), and its objective contribution is $\cconv \delta_{r_f} = c_{r_f \kappa_f}$ (see~\cref{eq:cost_definition}).
    If $\kappa_f = k_h$ and $\delta_{r_f} \leq D_E$, then the IPEC~\eqref{eq:ipec} for $p = r_f$ ensures that $\lambda_{r_f k_g} = 0$ (hence $\lambda_{r_f k_e} = 1$), implying
    that its objective contribution is $\cev \delta_{r_f}$, which is the same as $c_{r_f \kappa_f}$ (see~\cref{eq:cost_definition}).
    If $\kappa_f = k_h$ but $\delta_{r_f} > D_E$, then \cref{eq:ev} ensures that $\lambda_{r_f k_e} = 0$
    (hence $\lambda_{r_f k_g} = 1$) and its corresponding objective contribution is $\ccng \delta_{r_f} - (\ccng - \cev)D_E = \cev \min\{\delta_{r_f}, D_E\} + \ccng \max\{\delta_{r_f} - D_E, 0\} = c_{r_f \kappa_f}$ (see~\cref{eq:cost_definition}).
    This proves the claim.
\end{proof}

\subsection{Branch-Price-and-Cut Algorithm}\label{sec:bpc}

The number of binary variables and constraints in formulation~\eqref{eq:model} scales exponentially with $n$, prohibiting its direct solution by conventional IP solvers.
To address this issue, one can solve formulation~\eqref{eq:model}  using a BPC algorithm, which embeds column generation (also called pricing) and cutting planes in each node of a branch-and-bound search tree.
In the following paragraphs we describe the details of such an algorithm as it applies to formulation~\eqref{eq:model}; see \citet{pecin2017improved,pessoa2020generic} for general references on BPC algorithms for vehicle routing.

\subsubsection{Pricing subproblems}
The linear programming (LP) relaxations of~\eqref{eq:model} at each node of the branch-and-bound tree are solved by column generation.
The latter starts with only a small subset of feasible $ng$-routes for each vehicle type $k \in \sK^{+}$ and then iteratively solves pricing subproblems to add back those routes that have been neglected but that improve the objective value of the current LP relaxation.
For each vehicle type $k \in \sK^{+}$, the pricing subproblem is a shortest-path problem with resource constraints (SPPRCs) where the objective is to identify feasible $ng$-routes with negative reduced costs~\citep{pugliese2013survey}.
Each SPPRC is defined on the original graph $G$ where arc $(i, j) \in \sA$
has a travel cost equal to
$\cev d_{ij}$ for vehicles of type $k_e$,
$\cconv d_{ij}$ for vehicles of type $k_c$,
and $\ccng d_{ij} - \mathbb{I}[i=0](\ccng - \cev)D_E$ for vehicles of type $k_g$,
each of which is appropriately modified to account for contributions coming from the dual values of constraints in the current LP relaxation.
In addition, we define the following resource constraints for the various vehicle types:
\begin{itemize}
    \item Capacity resource constraint for vehicles of type $k \in \sK^{+}$, for which arc $(i, j) \in \sA$ has a resource consumption of $q_j$ units and each node $i \in \sV$ has resource limits of $[0, Q]$
    \item Duration resource constraint for vehicles of type $k \in \sK^{+}$, for which arc $(i, j) \in \sA$ has a resource consumption of $(t_{ij} + s_i)$ units and each node $i \in \sV$ has resource limits of $[0, T]$
    \item Travel distance resource constraint for vehicles of type $k_e$, for which arc $(i, j) \in \sA$ has a resource consumption of $d_{ij}$ units and each node $i \in \sV$ has resource limits of $[0, D_E]$
\end{itemize}
Although the SPPRC is weakly NP-hard, it can be solved efficiently in pseudo-polynomial time by using a labeling-based dynamic programming algorithm~\citep{pugliese2013survey}, the tractability of which depends crucially on the efficiency of its dominance checks.
In this context, all of the above resources are ``disposable,'' which enables the implementation of efficient dominance checks~\citep[Section~5.1.2]{pessoa2020generic}.
Note that a partial path $p$ (``partial'' in the sense that it does not necessarily end at node~$n+1$) dominates another partial path $p'$ whenever for every feasible path extended from $p'$ we can find a corresponding feasible path extension from $p$ with a smaller reduced cost.
More formally, $p$ dominates $p'$ if both end at the same terminal node and satisfy $F_p \subseteq F_{p'}$, $\eta_p \leq \eta_{p'}$ and $\tau_p \leq \tau_{p'}$ (and additionally $\delta_p \leq \delta_{p'}$ for vehicles of type $k_e$), where $F_p$ is the set of so-called forbidden vertices of path $p$~\citep{baldacci2011new}.

In principle, the \emph{lower travel distance limit} of $D_E$ units on vehicles of subtype $k_g$ can also be handled by defining an additional resource whose consumption along arc $(i, j) \in \sA$ is $d_{ij}$ and whose resource limits are equal to $[0, \infty]$ for nodes $i \in \{0\} \cup \sV_C$ and equal to $(D_E,\infty]$ for the destination node~$n+1$.
 Doing so, however, amounts to defining a ``nondisposable'' resource, since an accumulated resource consumption of less than $D_E$ units at node~$n+1$ is not feasible.
In this case the corresponding dominance check is  weak: a partial path~$p$ dominates $p'$ only if $\delta_p = \delta_{p'}$ (along with the aforementioned conditions of ending at the same terminal node, $F_p \subseteq F_{p'}$, $\eta_p \leq \eta_{p'}$ and $\tau_p \leq \tau_{p'}$)~\citep[Section~5.1.2]{pessoa2020generic}.
This reduces the SPPRC to almost complete enumeration of (exponentially many) paths with distances greater than $D_E$ units, resulting in impractical memory requirements.
Therefore, the lower travel distance limit for vehicles of subtype $k_g$ is enforced by using cutting planes via the IPEC defined in~\cref{eq:ipec}, which we describe in \cref{sec:cutting_planes}.

Because dominance checks can become computationally expensive, labels (or partial paths) with similar resource consumption levels are usually saved in a so-called bucket, and dominance is checked only among labels in the same bucket~\citep{pecin2017improved,sadykov2021bucket}.
Other acceleration techniques including heuristic pricing~\citep{fukasawa2006robust}, bidirectional labeling~\citep{righini2008new}, variable fixing~\citep{irnich2010path}, stabilized column generation~\citep{pessoa2018automation}, and dynamic $ng$-sets~\citep{roberti2014dynamic} have also been proposed.

\subsubsection{Cutting planes and separation}\label{sec:cutting_planes}
The solution of a restricted LP at each node of the search tree is iteratively followed by cut separation, where the goal is to add valid cutting planes that are violated at the current LP solution.
These include cuts that are  necessary to ensure solution feasibility, such as the IPEC~\cref{eq:ipec}, as well as those that are only strengthening but not necessary.

Although  the family of the IPEC  has  exponentially many inequalities, we can separate them using the following polynomial-time enumerative path-growing procedure.
Given a (fractional) solution $\lambda^*$ of formulation~\eqref{eq:model},
we first construct the corresponding graph $G_x = (\sV, \sA_x)$,
where
$A_x = \{(i, j) \in \sA : x^*_{ij} > 0\}$,
and the classical flow variables $x^*$ are defined as follows:
\begin{equation*}
    x_{ij}^* = \begin{dcases}
        \sum_{r \in \sR^{ng}} \beta_{ijr} \lambda_{r k_g}^* & \text{ if } (i, j) \in \sA \\
        0 & \text{ otherwise}
    \end{dcases} \;\; \text{ for all } (i, j) \in \sV \times \sV.
\end{equation*}
Starting from node~$0$, elementary paths are grown in a depth-first fashion by moving along the incident outgoing arcs of strictly positive flow.
Each path $p = (v_0 = 0, v_1, v_2, \ldots, v_l, v_{l+1})$ is extended as long as the total flow on its transitive closure, $\sum_{i=0}^l \sum_{j=i+1}^{l+1} x^*_{v_iv_j}$, is strictly greater than $|\sA_p| - 1 = l$ and as long as $v_{l+1} \neq n + 1$; that is, the path has not reached the destination depot.
By construction, every such identified path $p$ satisfying $\delta_p \leq D_E$ defines a corresponding violated IPEC~\cref{eq:ipec}.

Apart from the above necessary IPEC, we also separate the (strengthening but not necessary) rounded capacity inequalities: 
\begin{equation}
    \sum_{(i, j) \in \sA \cap (S \times S)} \sum_{k \in \sK^{+}}\sum_{r \in \sR^{ng}} \beta_{ijr} \lambda_{r k} \leq |S| - \left\lceil\frac{1}{Q}\sum_{i \in S} q_i\right\rceil, \;\;\; S \subseteq \sV_C.
\end{equation}
Furthermore, we  add the following strengthening inequality to the initial master problem:
\begin{equation}\label{eq:str_ineq}
    \sum_{r \in \sR^{ng}} \delta_r \lambda_{r {k_g}} \geq D_E \sum_{r \in \sR^{ng}} \lambda_{r {k_g}}.
\end{equation}
\Cref{eq:str_ineq} states that the total distance covered by all hybrid vehicles of subtype $k_g$ (i.e., the left-hand side)
must be at least as large as $D_E$ times the total number of those vehicles used (i.e., the right-hand side).
We emphasize that adding this inequality along with the infeasible path elimination and rounded capacity constraints does not affect the complexity of the pricing subproblems, since their corresponding dual values affect only the arc costs defining the SPPRC; that is, they are ``robust cuts.''
In contrast, the addition of so-called limited-memory subset row or rank-1 cuts (which are also strengthening but not necessary) significantly increases the complexity of the pricing subproblems; see \citet{jepsen2008subset} and \citet{pecin2017improved,pecin2017new} for details.

\subsubsection{Other implementation details}
Pricing and cut generation are iteratively performed at every node of the search tree until either the resulting (restricted) LP solution is integral and feasible or the addition of cuts does not significantly improve the LP objective value (i.e., tailing off occurs).
At this point, the algorithm proceeds by route enumeration or by branching.
The former refers to enumerating all {elementary routes} with reduced costs less than the current (local) primal-dual optimality gap, whenever this gap is sufficiently small (note that a dual bound can be obtained from any exact solution of the pricing subproblem).
The complexity and implementation of this enumeration step are similar to that of the aforementioned labeling algorithm~\citep{baldacci2008exact}.
If the optimality gap is large, then we proceed to branching, where we prioritize branching on the assignment of customers to vehicle types, given by
$
\sum_{r \in \sR^{ng}} \alpha_{ir} \lambda_{rk}
$.
That is, we branch primarily on the left-hand side of \cref{eq:degrees} for a fixed $k \in \sK^{+}$
and secondarily on the standard flow variable
$
\sum_{k \in \sK^{+}}\sum_{r \in \sR^{ng}} \beta_{ijr} \lambda_{r k}
$.

We utilize the VRPSolver package~\citep{pessoa2020generic} in our implementation with default options except for the addition of at most 10 limited-arc-memory rank-1 cuts (per cut round), %
bidirectional labeling for both heuristic and exact pricing, %
cut tailing-off threshold of $1\%$, and soft and hard time thresholds of $5$ and $10$ seconds, respectively, in the labeling algorithm.

\section{Iterated Tabu Search Metaheuristic}\label{sec:heuristic}
This section describes an iterated tabu search  metaheuristic tailored to solving large-scale REEVRP instances (e.g., with more than a thousand nodes).
This approach is inspired by existing local-search-based metaheuristics for the heterogeneous VRP~\citep{penna2013iterated,subramanyam2020robust}, albeit with some important differences that exploit characteristics of the REEVRP.
A key feature of this ITS approach is its simplicity because it introduces few user-defined parameters and does not require any instance-specific features to accelerate the search process.

ITS can be viewed as an improved version of iterated local search~\citep{lourencco2003iterated}.
The latter refers to the repeated application of local search to a current solution that may be generated either from scratch using a construction heuristic or from perturbing an existing solution.
Local search refers to the repeated use of fundamental moves that transform the current solution into a neighbor solution~\citep{aarts2003local}.
Given a set $M$ of moves, let
$\Omega_M\currentsol$ be the neighborhood of the current solution $\currentsol$ with respect to the move set $M$; that is,
$\Omega_M\currentsol$ is the set of solutions that can be reached from $\currentsol$ by applying a move from $M$.
The crucial components of any local-search-based metaheuristic, therefore, are the definition of $M$ and the exploration of $\Omega_M\currentsol$ using a search algorithm.
For the latter, we use tabu search~\citep{glover1997tabu} since it provides an efficient mechanism to escape local minima in the neighborhood, $\Omega_M\currentsol$.
For the former, we consider $M$ to be the (intra- and interroute) 1-0 relocate, 1-1 exchange, and 2-opt moves that involve the deletion and reinsertion of nodes or edges.
These moves define corresponding node- and edge-exchange neighborhoods that belong to the family of $k$-Opt and $\lambda$-interchange neighborhoods~\citep{funke2005local} with sizes $|\Omega_M(R, k)| = O(n^2)$.

\begin{algorithm}[!htb]
    \small
    \DontPrintSemicolon
    \caption{Iterated tabu search metaheuristic.\label{alg:its}}
    \KwIn{User-defined parameters $\ilsparam{rcl}$, $\ilsparam{tenure}$, $\ilsparam{maxiter}$, $\ilsparam{restart}$, $\ilsparam{perturb}$} 
    \KwOut{Best found solution $\bestsol$}
    $\bestsol \gets (\emptyset, \emptyset)$\;
    \While{\emph{termination criteria not met}}{
        $\currentsol \gets \textsc{Construct Solution}(\ilsparam{rcl})$ \Comment*[r]{Construction phase}
        $\currentsol \gets \textsc{Tabu Search}((R, \kappa), \ilsparam{tenure}, \ilsparam{maxiter})$\;
        $\textrm{counter} \gets 0$, $\tempsol \gets \currentsol$ \Comment*[r]{Perturbation phase}
        \While{\emph{counter} $ < \ilsparam{restart}$}{
            $\currentsol \gets \textsc{Perturb Solution}((R, \kappa), \ilsparam{perturb})$\;
            $\currentsol \gets \textsc{Tabu Search}((R, \kappa), \ilsparam{tenure}, \ilsparam{maxiter})$\;
            \lIf{$\phi\currentsol < \phi\bestsol$}{
                $\bestsol \gets \currentsol$
            }
            \eIf{$\phi\currentsol < \phi\tempsol$}{
                $\mathrm{counter} \gets 0$\;
                $\tempsol \gets \currentsol$\;
            }{
                $\mathrm{counter} \gets \mathrm{counter} + 1$\;
            }
        }
    }
\end{algorithm}

\Cref{alg:its} presents the ITS metaheuristic.
It consists of a construction phase and a perturbation phase.
The former involves the construction of an initial solution that is then improved by using tabu search.
The latter involves iteratively perturbing this solution and again improving it by using tabu search.
If this phase fails to encounter a solution that is better than the best solution found in the current iteration, $\tempsol$, for more than $\ilsparam{restart}$ attempts,
then the construction phase is restarted.
The overall algorithm is terminated when the total run time or the number of iterations exceeds some prescribed limit.

The merit function $\phi\currentsol$ is used to compare different solutions.
When $R = (r_1, \ldots, r_F)$ and $\kappa = (\kappa_1, \ldots, \kappa_F)$,
we define this function as
the weighted sum of its cost and constraint violations:
\begin{equation}\label{eq:penalized_cost}
    \phi\currentsol = c\currentsol + \sum_{f = 1}^F \left(
    \varphi^Q\max\left\{\eta_{r_f} - Q, 0\right\}
    +
    \varphi^T\max\left\{\tau_{r_f} - T, 0\right\}
    \right),
\end{equation}
where $\varphi^Q$ and $\varphi^T$ are penalty parameters that are chosen to be sufficiently large to ensure that a solution $\currentsol$ is better than another one $\tempsol$ (i.e., $\phi\currentsol < \phi\tempsol$) only if it primarily has a lower or equal constraint violation and secondarily a lower cost.
This approach allows the algorithm to handle infeasible solutions whose routes may not satisfy the capacity~\ref{condition:capacity} or duration~\ref{condition:duration} conditions.
Moreover, large values of $\varphi^Q$ and $\varphi^T$ discourage the algorithm from re-entering the infeasible region as soon as a feasible solution is encountered.
The parameters $\ilsparam{rcl}$, $\ilsparam{tenure}, \ilsparam{maxiter}$, and $\ilsparam{perturb}$ are used as inputs to the construction heuristic, tabu search, and perturbation mechanisms, respectively, which we describe next.

\subsection{Construction Heuristic}\label{sec:construction_heuristic}
The \textsc{construct solution} procedure constructs an REEVRP solution that always satisfies the set partitioning~\ref{condition:partition} and fleet availability~\ref{condition:fleet-size} constraints; however, the individual routes need not necessarily satisfy the vehicle capacity~\ref{condition:capacity} or duration~\ref{condition:duration} limits.
The procedure adds customers into an empty solution using the following sequential insertion heuristic.

Each iteration of the insertion heuristic consists of two steps.
First, if the number of REEV routes in the current (partial) solution is less than its available number $m_{k_h}$, then an empty REEV route is constructed; otherwise, an empty CV route is constructed provided again that the number of CV routes is less than $m_{k_c}$.
Second, unrouted customers are repeatedly inserted into this (current) route until either all customers have been routed or there is no feasible insertion position in the route; in other  words, inserting any unrouted customer at any position in the route would  exceed either the vehicle capacity $Q$ or the duration limit $T$.
Specifically, each unrouted customer that can be feasibly inserted into at least one position is first added to a so-called restricted candidate list.
A random customer is then selected from this list and inserted in the position that minimizes a greedy function, defined as a randomly weighted sum of the corresponding {insertion cost}, the residual capacity $Q - \eta_r$, and the residual duration $T - \tau_r$.
The restricted candidate list is cardinality-based; that is, it is sorted according to the greedy function, and its length is not allowed to exceed $\ilsparam{rcl}$.
This parameter controls the extent of randomization and greediness with our implementation using $\ilsparam{rcl} = 3$.
If no empty route can be constructed in the first step, then any remaining unrouted customer is inserted into an existing route (and corresponding position) for which a randomly weighted sum of the insertion cost, capacity violation, and duration violation is minimized. 

\subsection{Tabu Search}
The \textsc{tabu search} procedure enhances the performance of classical local search by \textit{(i)} allowing non-improving moves and \textit{(ii)} potentially disallowing improving moves.
This enhancement, which is achieved by  using a short-term memory that is also known as a {tabu list}, provides a formal mechanism for the current solution to potentially escape from local minima in the neighborhoods defined with respect to the considered elementary moves.
The tabu list keeps track of the most recently visited solutions in the search history and prevents revisiting them for a predefined number of local search iterations, $\ilsparam{tenure}$.
Any potential solution that has been visited in the last $\ilsparam{tenure}$ iterations is marked ``tabu'' (or forbidden) and inserted into the tabu list, to prevent the algorithm from cycling and repeatedly visiting the same solutions.
In fact, a neighbor solution that is in the tabu list can be visited only if certain \emph{aspiration criteria} are met; in particular, the tabu status of a solution is overridden only if it improves upon the best-encountered solution.
The tabu search terminates if it performs $\ilsparam{maxiter}$ local search iterations without observing any further improvement.
Our implementation uses $\ilsparam{tenure} = 20$ and $\ilsparam{maxiter} = 100$.

At each iteration,
\emph{(i)} we randomly select a neighborhood (intra- or interroute relocate, exchange, or 2-opt);
\emph{(ii)} traverse it in lexicographic order applying pruning mechanisms based on both feasibility and gain; and
\emph{(iii)} use a first-improvement strategy to replace the current solution.
The per-iteration run time is $\mathcal{O}(n^2)$ in the worst case, since the size of each of the aforementioned neighborhoods is $\mathcal{O}(n^2)$ and each move is evaluated in constant time by simply updating the current value of $\phi\currentsol$.
Specifically, in addition to storing the values of $\delta_{r_f}$, $\eta_{r_f}$ and $\tau_{r_f}$ for the routes $r_f$, $f \in \{1, \ldots, F\}$ of the current solution, we  store and incrementally update (in constant time) the value of $\max\{\delta_{r_f} - D_E, 0\}$ for REEV routes $r_f$ with $\kappa_f = k_h$, which enables the efficient evaluation of the travel cost for REEV routes as well (see~\cref{eq:cost_definition}).

\subsection{Perturbation Mechanism}

The goal of the \textsc{perturb solution} procedure is to apply ``large'' perturbations to the current solution $\currentsol$ so as to encourage a low likelihood of encountering the resulting solution by applications of tabu search alone.
To that end, the procedure first removes the route $r$ of vehicle type $k$ from the current solution that has the maximum {average cost per unit of carried load}, defined as $c_{rk}/\eta_r$.
In addition, it  removes any routes $r_1', r_2', \ldots$ that are ``sufficiently close'' to $r$, namely, routes for which $\Delta(r, r_i') \coloneqq \max_{(i, j) \in r \times r_i'} d_{ij} < \ilsparam{perturb}$.
If all routes $r' \neq r$ satisfy $\Delta(r, r') \geq \ilsparam{perturb}$, then a route $r'' \in \argmin_{r' \neq r} \Delta(r, r')$ is removed from the current solution.
All customers that are visited on the deleted routes are then considered to be unrouted and are added back to the current partial solution by using the same sequential insertion heuristic described in \Cref{sec:construction_heuristic}.
The parameter $\ilsparam{perturb}$ determines the extent of perturbation, with larger values corresponding to higher extents of perturbation.
Our implementation uses $\ilsparam{perturb} = 0.6 \max_{(i, j) \in \sV_C \times \sV_C} d_{ij}$.

\section{Case Studies}\label{sec:experiments}

The goals of this section are threefold.
First, \Cref{sec:design} describes the details of our experimental design including data generation.
Second, \Cref{sec:performance} illustrates the numerical performance of the proposed BPC and ITS algorithms.
Third, \Crefrange{sec:deployment}{sec:service_time} provide key managerial and policy insights.

\subsection{Design of experiments} \label{sec:design}

As mentioned in \Cref{sec:description}, we assume that Amazon, FedEx, UPS, and USPS are tasked with delivering packages on a typical weekday in the Chicago metropolitan area. The demand and road network data are obtained from POLARIS~\citep{auld2016polaris}. In the POLARIS Chicago demand database, 4 million households  correspond to 10 million individuals. According to the National Household Travel Survey data from 2017~\citep{nhts}, each household generates roughly one order per week. We therefore assume that on the considered day, roughly 600,000 households (which we also refer to as customers) require an e-commerce delivery service.
We randomly draw their locations from the database in a uniform manner.

In addition, we identify 53 depot locations of the aforementioned LSPs from publicly available sources. These sources indicate that Amazon, FedEx, UPS, and USPS operate 17, 10, 13, and 13 depots in the area, respectively. We assume customers are randomly distributed to Amazon, FedEx, UPS, and USPS following 21, 16, 24, and 39 percentage shares, respectively~\citep{pitney}.
To efficiently carry out these delivery tasks, we assume that each provider first solves an assignment problem to determine the optimal depot-to-customer allocation. %
The objective of this assignment problem is to minimize the sum of out-and-back travel times between customers and depots, subject to aggregate depot-level capacity constraints while ensuring that each customer is assigned to some depot. 

Each of the 53 depots defines an REEVRP instance averaging 11,447 customers. \Cref{fig:REEVRP_instance} visualizes an example instance over the Chicago map. 
These instances are so large that simply constructing and loading each travel time matrix in memory---let alone optimizing over them---requires more than 130~million shortest-path calculations.
This necessitates some sort of network aggregation.
Moreover, and perhaps more important, we do not have access to accurate network information at a fine microscopic level.
Indeed, since POLARIS is a mesoscopic traffic simulation software, it does not account for local street-level roads where customers are typically located; instead it considers  only  interstates, principal and other arterials, and major collectors.
Therefore, we downscale the original network by aggregating customers located along individual network arcs by introducing nodes known as \emph{superlocations} at the centers of these arcs. \Cref{fig:super_location} details the placement of superlocations on a small portion of the network. 
After we perform this aggregation, the resulting depot-level instances vary in size from 92 to 3,706 superlocations, with an average and standard deviation of 1,345 and 872, respectively.
The travel time matrix between superlocations is computed by  using Dijkstra's shortest paths algorithm, using POLARIS-provided network information (i.e., nodes, arcs, arc-speeds, and arc-lengths).

\begin{figure}[!htb]
    \begin{center}
        \includegraphics[width=0.4\linewidth]{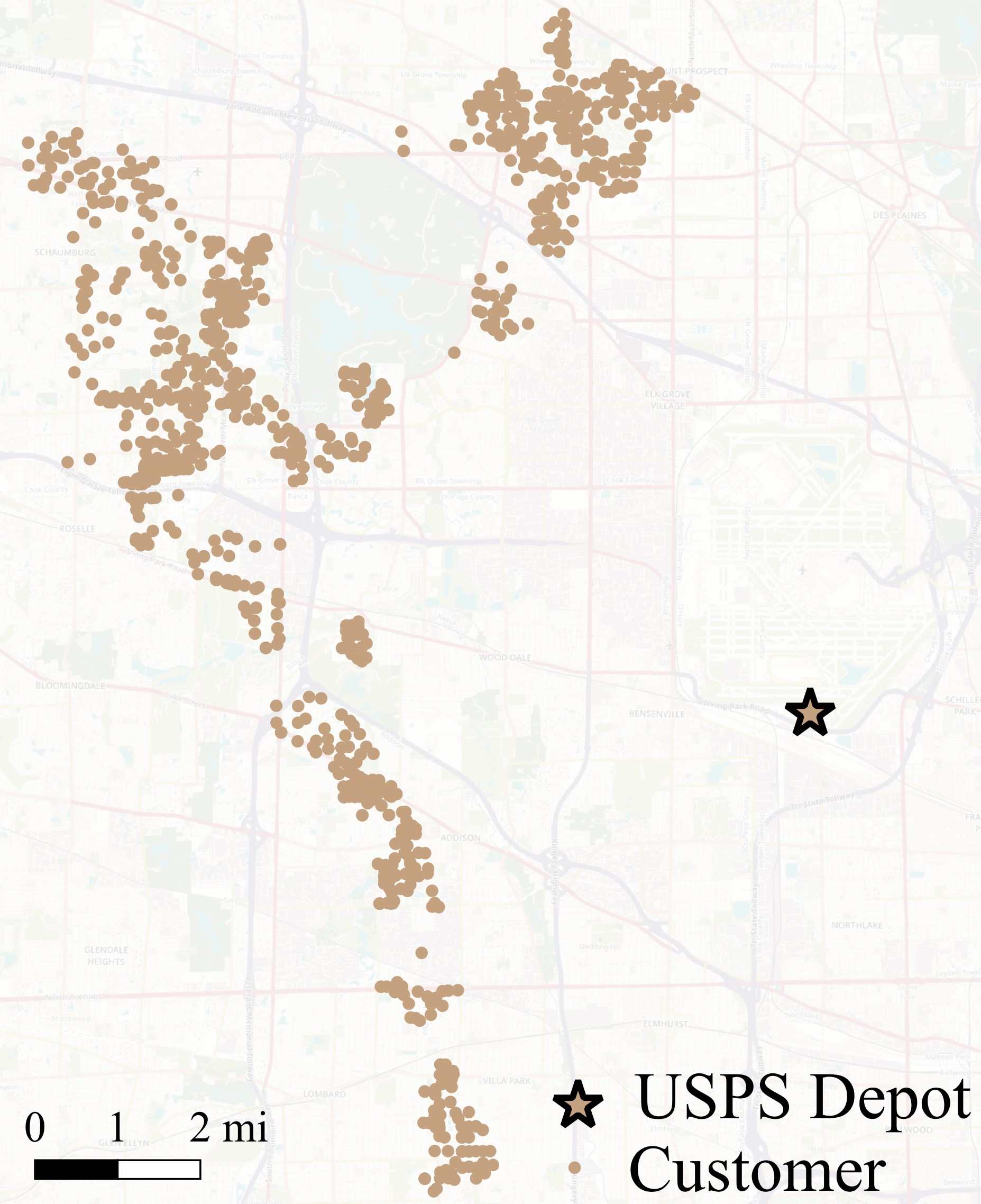}
    \end{center}
    \caption{Layout of an illustrative depot-level REEVRP instance.\label{fig:REEVRP_instance}}
\end{figure}

\begin{figure}[!htb]
    \begin{center}
        \includegraphics[width=0.6\linewidth]{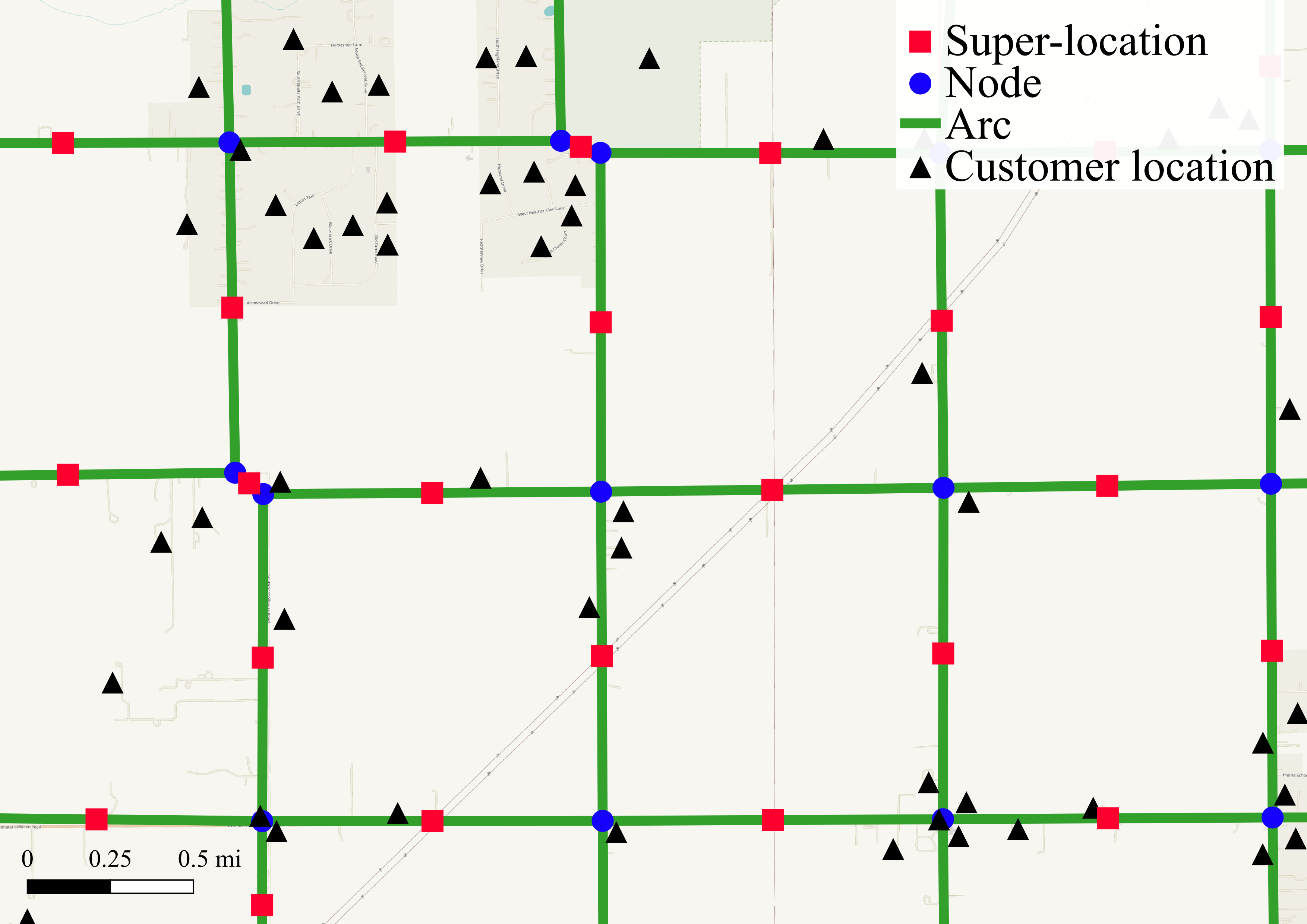}
    \end{center}
    \caption{Illustration of superlocations and their relation to other network components.\label{fig:super_location}}
\end{figure}

The total number of packages to be delivered in the $i^\text{th}$ superlocation (summed over all customers located there) defines its corresponding demand $q_i$ (since every customer is associated with only one package).
The customer-to-customer travel times for customers that are located inside the same superlocation are estimated by dividing their Manhattan distance with a preset low speed of 15~mph because trucks move significantly slower on these minor roads.
The optimal visiting sequence of customers in each superlocation is found by solving a {small} traveling salesman problem.
In particular, the sum of the corresponding total travel time and a per-customer package dropping time of $\sigma$ minutes defines the service time $s_i$ for the $i^\text{th}$ superlocation.
The sum of the corresponding total travel distance is incorporated into the distance matrix entry, $d_{ji}$, for all other superlocations $j\neq i$.

For each depot, we initially consider only two vehicle types: REEVs and CVs.
Subsequently, we also investigate the effects of replacing all REEV trucks with pure BEV trucks, to study the trade-off between these two powertrains.
Note that pure BEV trucks can be easily modeled as a special case of REEV trucks by disallowing the use of $k_g$ subtypes in the BPC algorithm and by setting $\ccng = +\infty$ (or a sufficiently large number) in the ITS algorithm.
Doing so prohibits the hybrid vehicles from using their range extenders.
In our experiments we also disallow BEV battery recharging along their routes but increase their EV range to enable a fairer comparison with their REEV counterparts.
Nonetheless, since pure BEVs have higher ownership- and infrastructure-related fixed costs that are not accounted for in this study, their estimated costs may be slightly undercounted.

The mileage cost of diesel-powered CVs is estimated to be
$\cconv = c_D / \mu_D$, where $c_D$ is the cost of diesel per gallon (USD/gallon) and $\mu_D$ is the fuel economy in miles per gallon (MPG).
The mileage cost of REEVs in EV mode is estimated to be
$\cev = c_E \cdot \mu_E$, where $c_E$ is the cost of electricity (USD/kWh) and $\mu_E$ is the vehicles' electricity consumption rate in EV mode (kWh/mile).
Their mileage cost in CNG mode is estimated to be
$\ccng = c_G / \mu_G$, where $c_G$ is the cost of CNG per gasoline gallon equivalent (USD/GGE) and $\mu_G$ is its charge-sustaining mode fuel economy in miles per gasoline gallon equivalent (MPGe).
We use the CNG range extender medium-duty vehicle developed by \citet{cerc} as the reference REEV type, for which
$\mu_E = 1.14$ kWh/mile,
$\mu_G = 10.1$ MPGe,
and $D_E = 33$ miles (in the REEV/CV case) and $150$ miles (in the BEV/CV case).
Based on year~2020 data from \citet{eia} and July~2021 data from \citet{afdc}, we set
$c_E = 9.90$ \textcent/kWh,
$c_G = 2.22$ USD/GGE,
and
$c_D = 3.36$ USD/gallon.
The average fuel economy of diesel-powered parcel delivery trucks is reported between 7.9 and 10.2 MPG~\citep{lammert2009twelve,barnitt2011fedex}; therefore, we set
$\eta_D = 9.0$ MPG.
The baseline deployment scenario sets
$\sigma = 4$ minutes,
$Q = 120$ packages,
$T = 10$ hours,
and $m_{k_h} = 25$ hybrid vehicles per depot (in both REEV/CV and BEV/CV case studies).
We investigate regionwide operational impacts and sensitivity of our findings to variations in several of these parameter values%
in \Crefrange{sec:deployment}{sec:service_time}.
In each case study reported, we run the ITS metaheuristic 10~times (each with a time limit of one~hour) and use the best of these 10~runs to estimate the corresponding optimal solution.

\subsection{Computational Performance of the BPC and ITS Methods}\label{sec:performance}

We begin by analyzing the computational performance of the proposed BPC and ITS algorithms as a function of several operational parameters.
To that end, we design a testbed of smaller REEVRP instances as follows.
First, for each depot in a random subset of 25 depots from \Cref{sec:design}, we define two smaller subnetworks by randomly drawing $n \in \{50, 100\}$ superlocations from these instances.
We then consider six scenarios, parameterized as per \Cref{table:performance_set}, for each of the $25\times 2$ subnetworks, resulting in a total of $25 \times 2 \times 6 = 300$ test instances.

\begin{table}[!htbp]
    \footnotesize
    \centering
    \caption{Parameterization of operational inputs in scenarios defining the smaller-sized test instances.}
    \label{table:performance_set}
    \begin{tabularx}{\textwidth}{lCCCC}
        \toprule
        Scenario & $m_{k_h}$ (\# per depot)  & $Q$ (packages) & $D_E$ (miles) & $T$ (hours) \\
        \midrule
        1 (baseline)          & 5   & 80 & 33 & 8\\
        2 (all REEV)          & $n$ & 80 & 33 & 8\\
        3 (all CV)            & 0   & 80 & 33 & 8\\
        4 (higher capacity)   & 5   & 120 & 33 & 8\\
        5 (higher EV range)   & 5   & 80 & 66 & 8\\
        6 (higher work hours) & 5   & 80 & 33 & 10\\
        \bottomrule
    \end{tabularx}
\end{table}

All runs were conducted on Intel~Xeon servers.
The ITS algorithm was implemented in C++, and each ITS run was performed on a 2.3~GHz server, limited to 10~minutes of run time on a single CPU thread.
Each BPC run was performed on a 3.1~GHz server, limited to a single CPU thread with 10~GB RAM and 1~hour of computational time.
We used the Julia interface of VRPSolver~\citep{pessoa2020generic} as the BPC backend (e.g., for column generation and branching) along with a Julia routine to add cuts.
All subordinate optimization problems were solved by using the CPLEX~12.10.0 optimizer \citep{CPLEX}.
Each BPC run was initialized with an upper bound corresponding to the objective value of a solution that was computed by  using a single run of the ITS algorithm.

\Cref{table:bpc_summary} reports the numerical performance of the BPC algorithm across all 300 test instances.
For each scenario, the ``Optimal'' column reports the number of instances that were solved to optimality along with the average solution time in seconds. 
The ``Nonzero optimality gap'' column reports the number of instances that could not be solved to optimality in the time limit of one hour, along with the corresponding average optimality gap, defined as $(z_\text{ub}-z_\text{lb})/z_\text{ub} \times 100\%$, where $z_\text{ub}$ and $z_\text{lb}$ are the global upper and lower bounds at termination, respectively.
These quantities are also graphed in the box plots in \Cref{fig:bpc}.
The ``\# IPEC'' column reports the average number of violated IPEC \cref{eq:ipec} that were added in the algorithm.
 We observe that the overall performance of the BPC algorithm when compared with the baseline scenario is fairly consistent across scenarios but deteriorates as the EV range $D_E$ or the number of nodes $n$ increases.
Nevertheless, we observe that the BPC algorithm can find and certify optimal solutions in 113 out of 300 instances in less than 10~minutes (on average)
and can compute reasonably tight optimality gaps of roughly 5\% in other cases.

\begin{table}[!thbp]
    \footnotesize
    \centering
    \caption{Summary of computational performance of the BPC algorithm.}
    \label{table:bpc_summary}
    \begin{tabularx}{0.96\textwidth}{lC*{5}{R}}
        \toprule
        Scenario & \# Total Inst. &  \multicolumn{2}{c}{Optimal} &  \multicolumn{2}{c}{Nonzero gap} &          \# IPEC \\
        \cmidrule(r){3-4}\cmidrule(l){5-6}
        &                &     \# Inst. &    Time (sec) &       \# Inst. &        Gap (\%) & \\
        \midrule
        1 &             50 &           24 &         220.6 &             26 &            3.20 &              5.9 \\
        2 &             50 &            5 &         572.7 &             45 &           10.42 &           2,175.7 \\
        3 &             50 &           30 &         687.1 &             20 &            1.57 &              0.0 \\
        4 &             50 &           22 &         499.6 &             28 &            2.95 &             26.1 \\
        5 &             50 &           10 &         611.9 &             40 &            5.15 &             71.6 \\
        6 &             50 &           22 &         218.2 &             28 &            2.73 &              7.7 \\
        $n=50$ &            150 &          102 &         320.0 &             48 &            7.47 &            602.0 \\
        $n=100$ &            150 &           11 &        1,640.0 &            139 &            4.24 &            160.4 \\
        \midrule
        All &            300 &          113 &         448.5 &            187 &            5.07 &            381.2 \\
        \bottomrule
    \end{tabularx}
\end{table}

\begin{figure*}[!htb]
    \centering
    \subfloat[Optimality gaps\label{fig:bpc_gaps}]{%
        \includegraphics*[width=0.45\textwidth,height=\textheight,keepaspectratio]{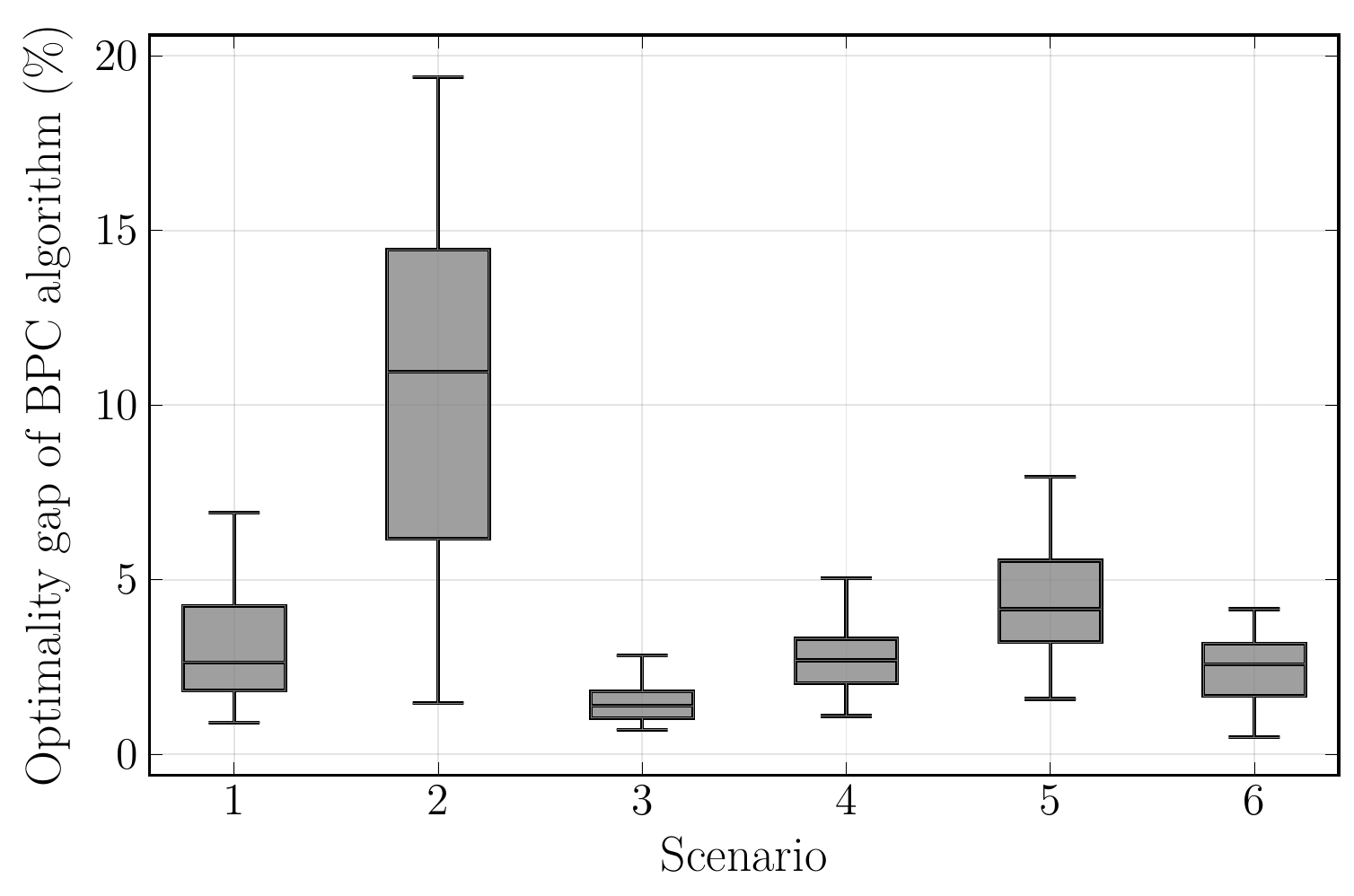}}\hfil
    \subfloat[Solution times\label{fig:bpc_times}]{%
        \includegraphics*[width=0.45\textwidth,height=\textheight,keepaspectratio]{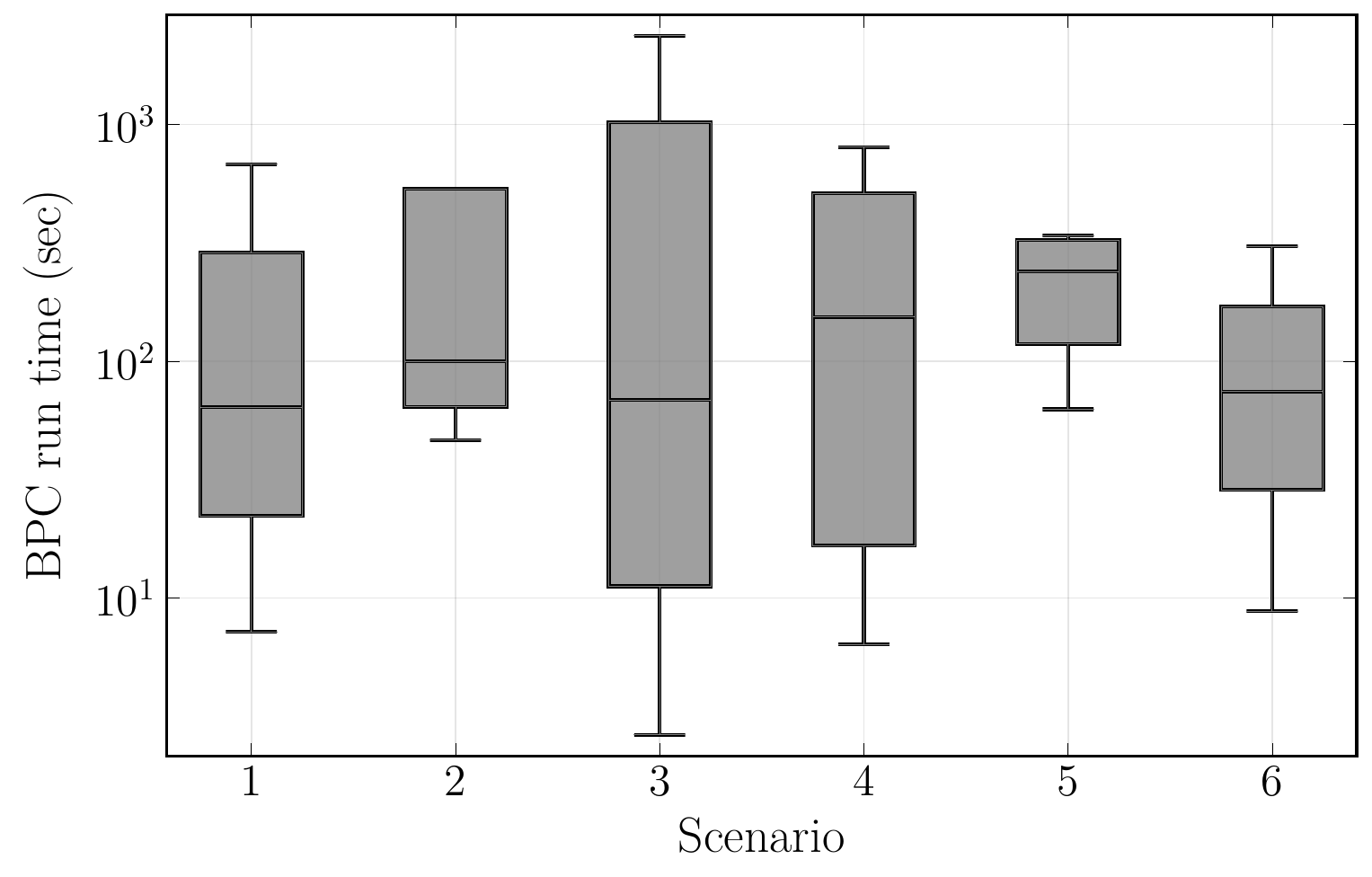}}
    \caption{Guaranteed optimality gaps (for instances not solved to optimality in one~hour) and solution times (for optimally solved instances) of the BPC algorithm across the 6 scenarios in \Cref{table:performance_set}.} \label{fig:bpc}
\end{figure*}

\Cref{table:its_summary} reports the computational performance of the ITS algorithm across all 300 test instances, with each run repeated 10 times using different random seeds.
For each scenario, the ``\# Optimal'' column reports the number of instances for which the optimal objective value is known (from \Cref{table:bpc_summary}) and for which the ITS algorithm found a solution matching the optimal objective value.
The ``Best 10'' and ``Avg. 10'' columns report, respectively, the best and average optimality gaps of the ITS solutions with respect to the global lower bounds from \Cref{table:bpc_summary}.
These quantities are defined as
$(z_\text{best}-z_\text{lb})/z_\text{best} \times 100\%$
and
$(z_\text{avg}-z_\text{lb})/z_\text{avg} \times 100\%$,
where $z_\text{best} = \min\{z_1, \ldots, z_{10}\}$
and
$z_\text{avg} = (z_1+\ldots+z_{10})/10$ 
are the best and average of the objective values,
$z_1, \ldots, z_{10}$,
from the 10 ITS runs.
The ``Avg. dev. (\%)'' and ``Avg. time (sec)'' columns report the average deviation, $\frac{1}{10}\sum_{k=1}^{10} (z_k-z_\text{best})/z_\text{best} \times 100\%$
and average time to find the best solution, respectively.
These quantities are also graphed in the box plots in \Cref{fig:its}.

\begin{table}[!thb]
    \footnotesize
    \centering
    \caption{Summary of computational performance of the ITS algorithm.}
    \label{table:its_summary}
    \begin{tabularx}{\textwidth}{lCRCCCR}
        \toprule
        Scenario & \# Total Inst. & \# Optimal &              \multicolumn{2}{c}{Optimality gap} &            Avg. dev. &            Avg. time \\
        \cmidrule{4-5}
        &                &                        &                Best 10 &  Avg. 10 &                 (\%) &                (sec) \\
        \midrule
        1 &             50 &  19/24 &                   1.43 &                   1.73 &                 0.32 &                199.0 \\
        2 &             50 &   5/5 &                   9.28 &                   9.39 &                 0.12 &                165.1 \\
        3 &             50 &  21/30 &                   0.61 &                   0.73 &                 0.11 &                179.6 \\
        4 &             50 &  19/22 &                   1.47 &                   1.73 &                 0.26 &                191.5 \\
        5 &             50 &   8/10 &                   3.87 &                   4.15 &                 0.29 &                226.4 \\
        6 &             50 &  19/22 &                   1.30 &                   1.56 &                 0.26 &                187.6 \\
        $n=50$ &            150 &  90/102 &                   2.41 &                   2.45 &                 0.04 &                 91.8 \\
        $n=100$ &            150 &   1/11 &                   3.58 &                   3.98 &                 0.41 &                291.3 \\
        \midrule
        All &            300 &  91/113 &                   2.99 &                   3.22 &                 0.23 &                191.5 \\
        \bottomrule
        
    \end{tabularx}
\end{table}

\begin{figure*}[!htb]
    \centering
    \subfloat[Average deviation\label{fig:its_deviation}]{%
        \includegraphics*[width=0.45\textwidth,height=\textheight,keepaspectratio]{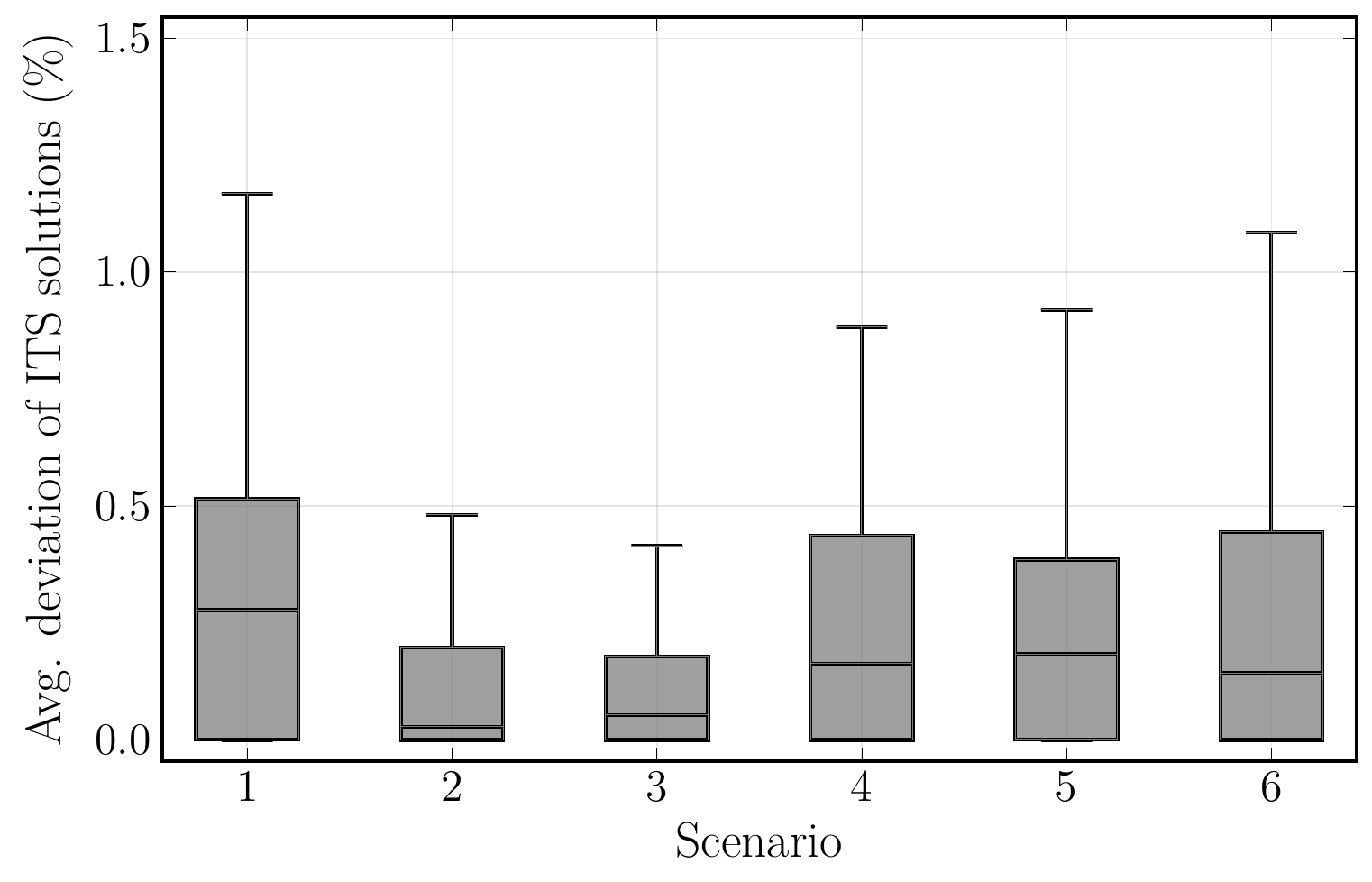}}\hfil
    \subfloat[Solution times\label{fig:its_timetobest}]{%
        \includegraphics*[width=0.45\textwidth,height=\textheight,keepaspectratio]{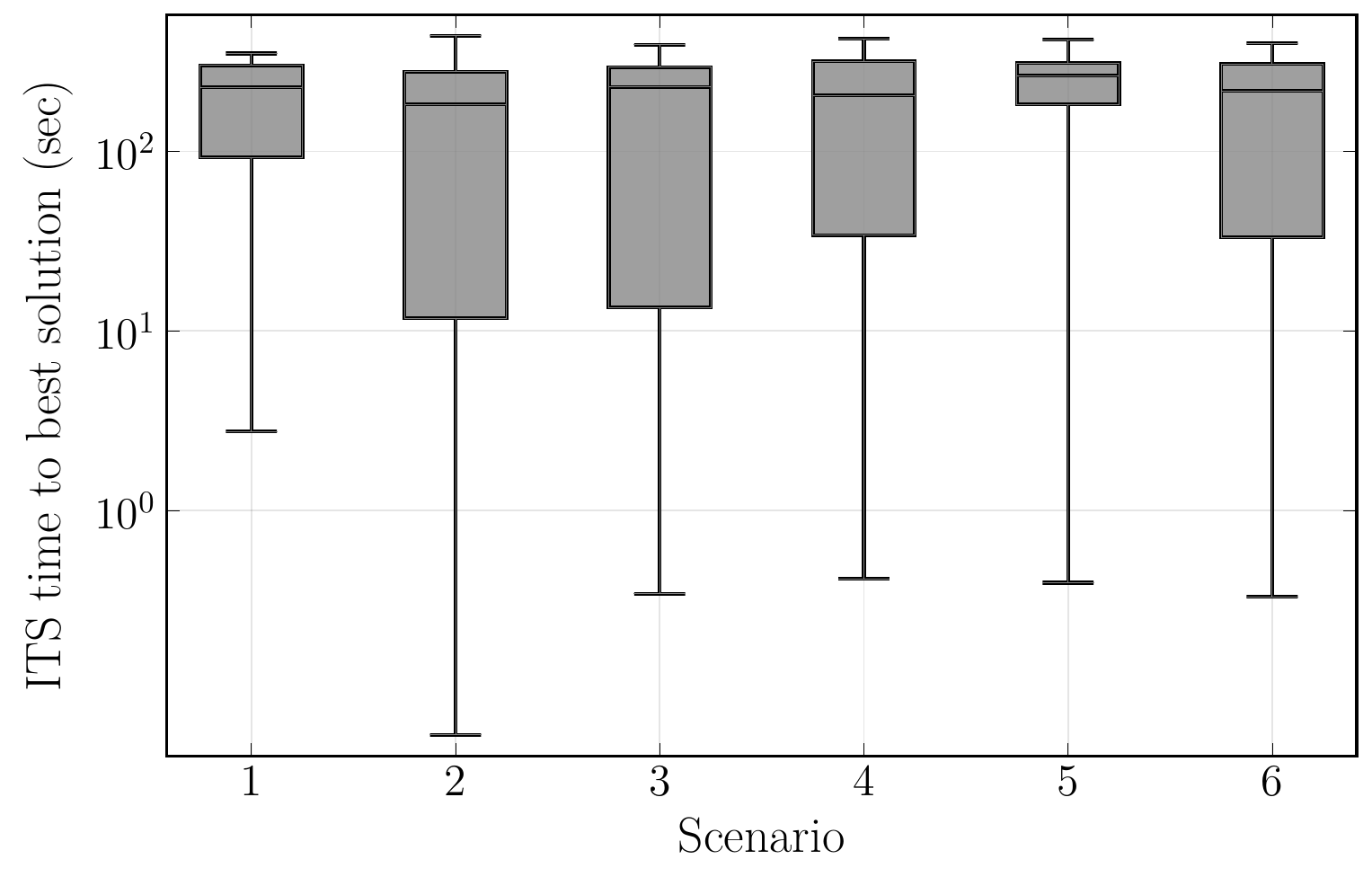}}
    \caption{Average deviation and run times of the ITS metaheuristic across the 6 scenarios in \Cref{table:performance_set}.} \label{fig:its}
\end{figure*}

We make the following observations from \Cref{table:its_summary}.
First, the ITS algorithm can find the optimal solution in 91 out of 113 instances for which the optimal value is known.
This indicates that the reported optimality gaps in \Cref{table:bpc_summary} and \Cref{table:its_summary} are potentially quite conservative and limited by the lower bounds from the BPC algorithm rather than the (suboptimality of the solutions computed by the ITS algorithm.
Second, the low deviation of roughly $0.23\%$ across the 10 runs indicates that the ITS method is fairly robust to the initial random seed. 
Third, the run times are reasonably small across the different scenarios, with the biggest distinction being the problem size, where the run time can increase from roughly 90~seconds for $n=50$ nodes to 3~minutes for $n=100$ nodes.

\subsection{Impact of EV Deployment on Systemwide Performance Metrics}\label{sec:deployment}

We now analyze different deployment scenarios to assess the impact of REEVs and BEVs on the operational energy cost, VMT, and VHT.
The first case considers all regional-level deliveries (i.e., the 53-depot Chicago metropolitan area) using REEVs and CVs.
The second case considers the impact of replacing all REEVs with BEVs.
In the following sections we refer to these two cases simply as ``REEV'' and ``BEV,'' respectively.
In each of the two cases, we consider six scenarios of $0$, $25$, $50$, $100$, $200$, and $2,000$ EVs per depot (be it REEV or BEV).
Note that the number $2,000$ here is meant to represent an arbitrarily large value;  no depot uses so many vehicles.
We refer to the scenarios of $0$ EVs and $2,000$ EVs as ``All CV'' and ``All EV,'' respectively.

\begin{figure*}[!htb]
    \centering
    \subfloat[Cost\label{fig:Costgivendeployment}]{%
        \includegraphics*[width=0.32\textwidth,height=\textheight,keepaspectratio]{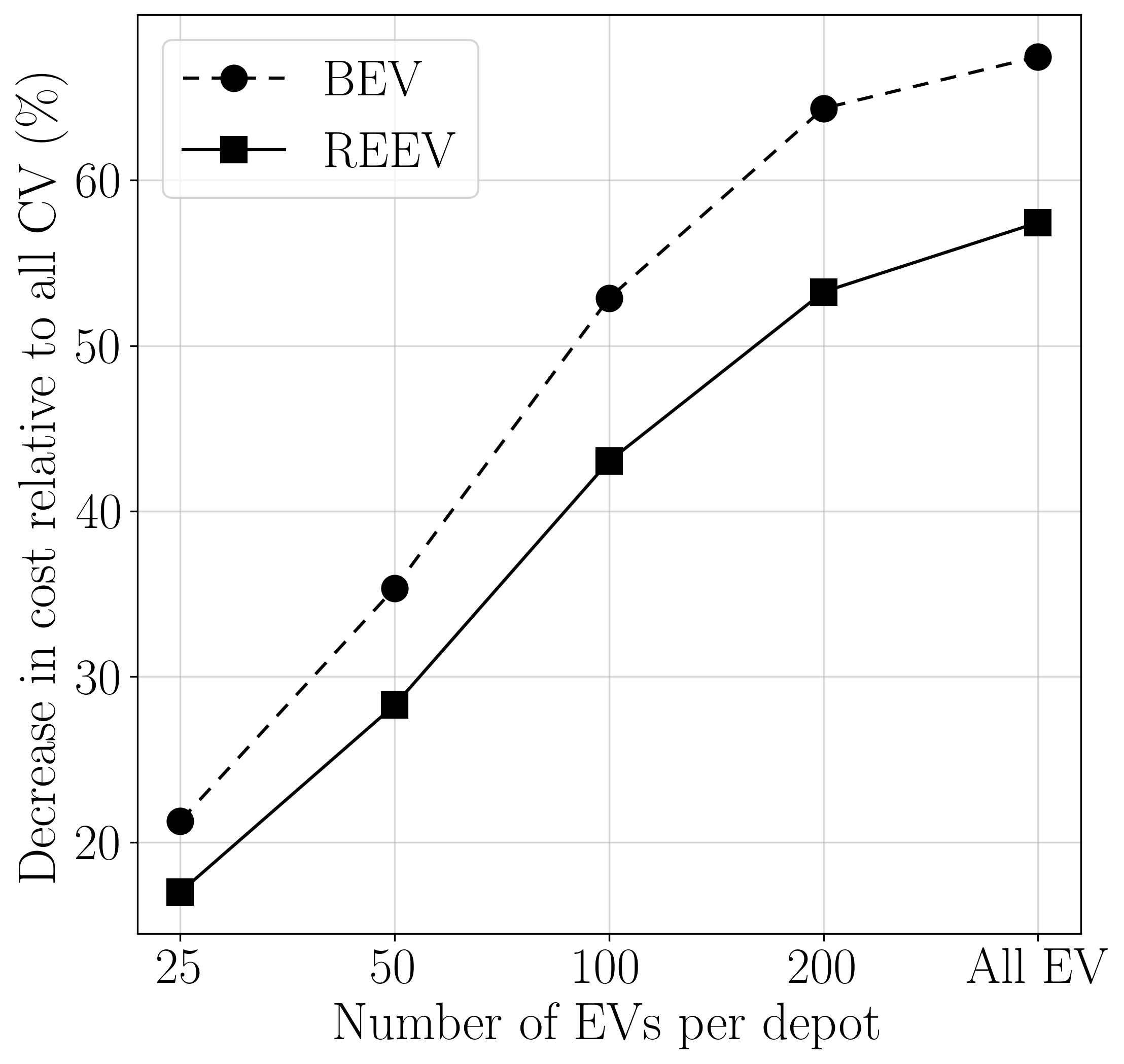}}
    \subfloat[VMT\label{fig:VMTgivendeployment}]{%
        \includegraphics*[width=0.33\textwidth,height=\textheight,keepaspectratio]{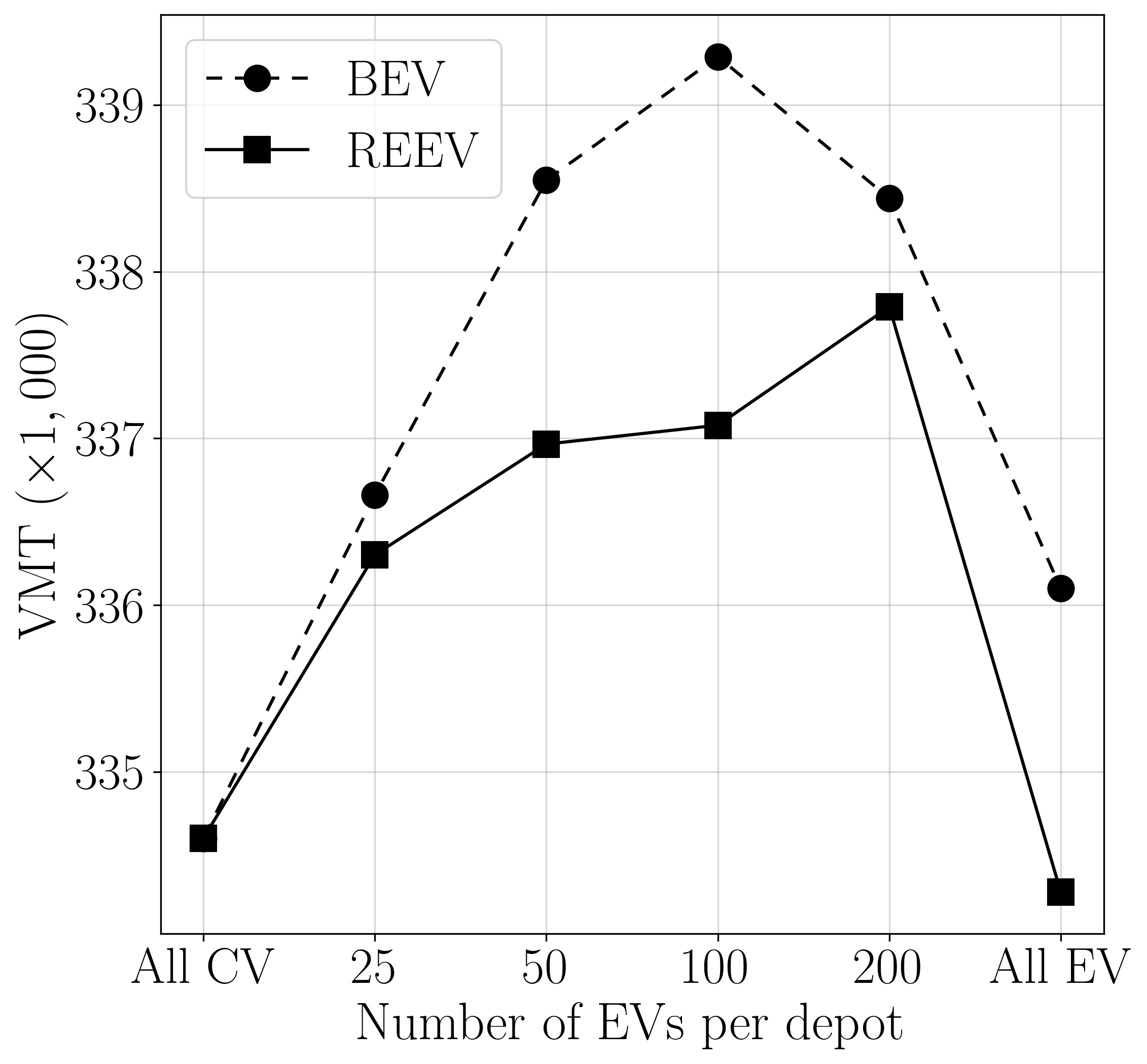}}
    \subfloat[VHT\label{fig:VHTgivendeployment}]{%
        \includegraphics*[width=0.34\textwidth,height=\textheight,keepaspectratio]{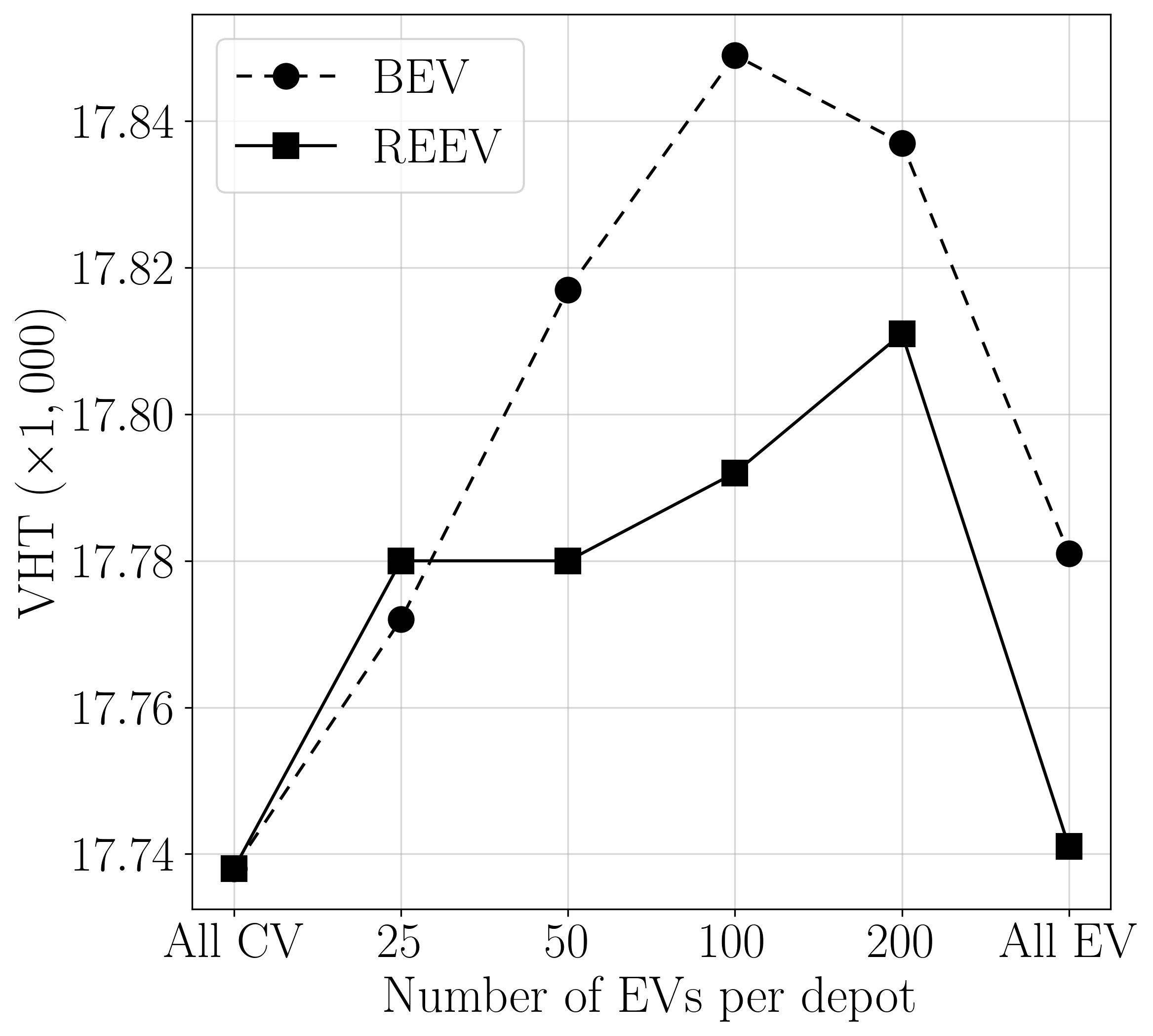}}\\
    \subfloat[VMT per mode in REEV case\label{fig:VMTpermodegivendeploymentREEV}]{%
        \includegraphics*[width=0.33\textwidth,height=\textheight,keepaspectratio]{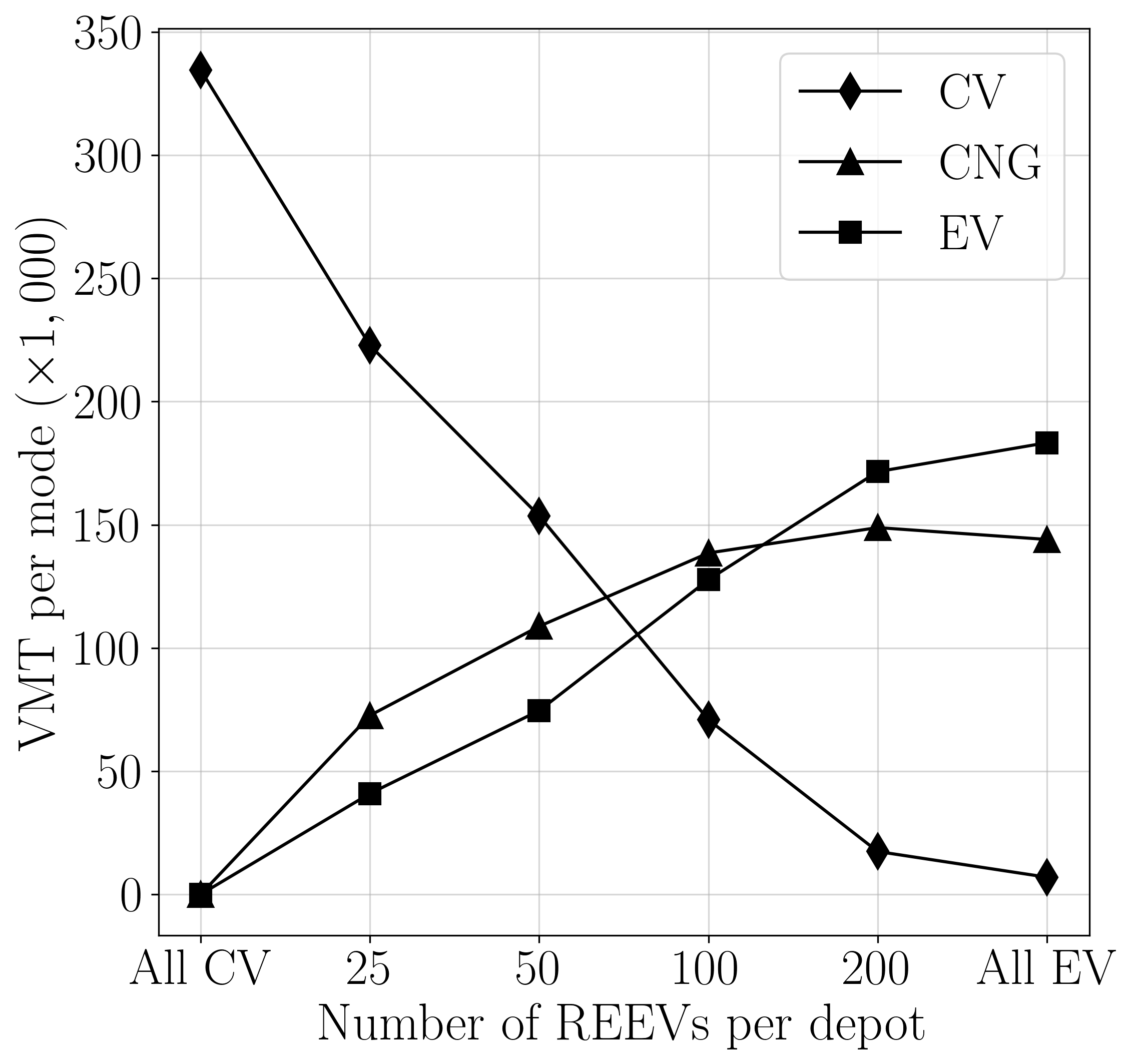}}
    \subfloat[VMT per mode in BEV case\label{fig:VMTpermodegivendeploymentBEV}]{%
        \includegraphics*[width=0.333\textwidth,height=\textheight,keepaspectratio]{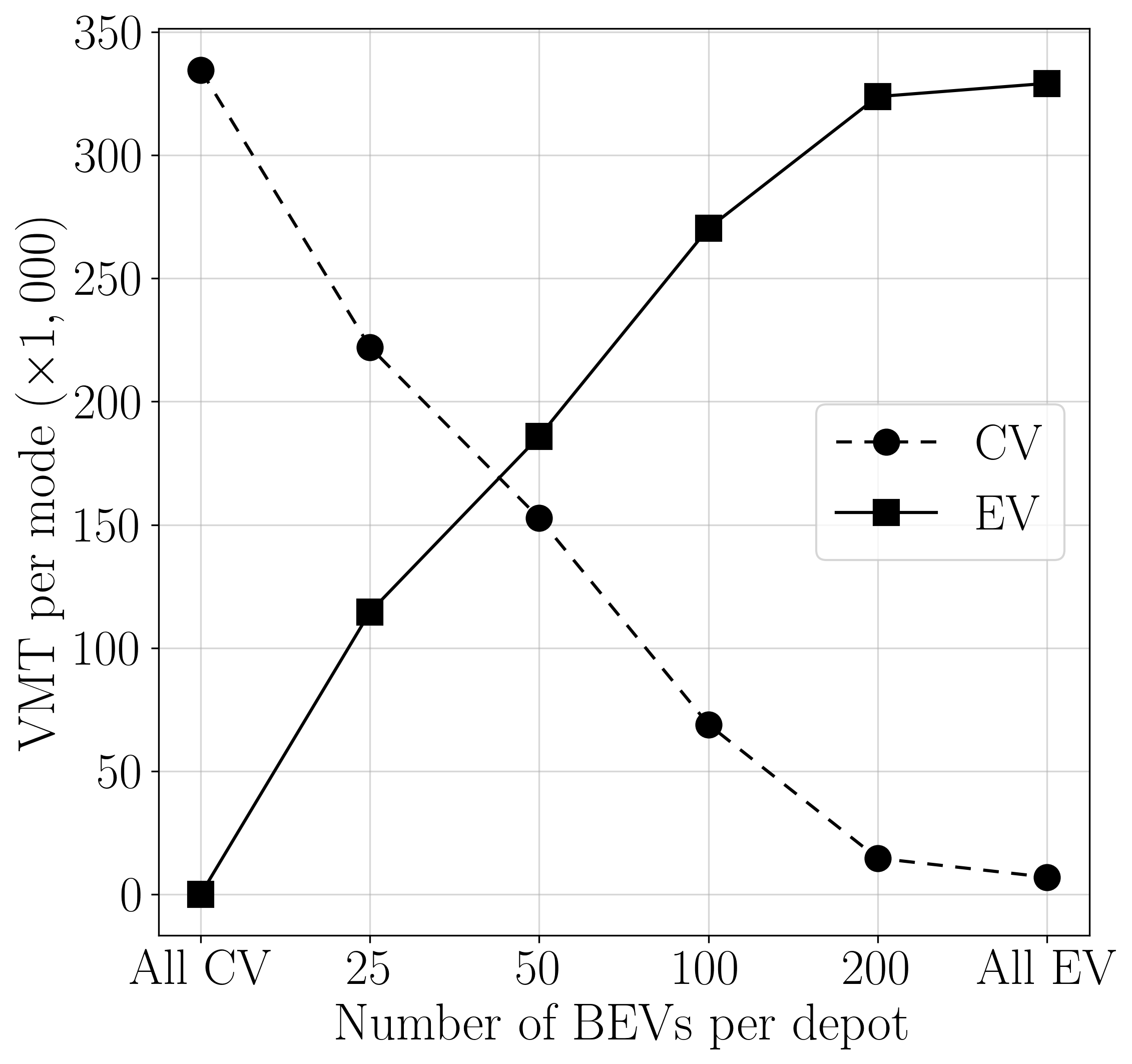}}
    \subfloat[Number of vehicles per type\label{fig:NumvehgivendeploymentpertypeREEV}]{%
        \includegraphics*[width=0.316\textwidth,height=\textheight,keepaspectratio]{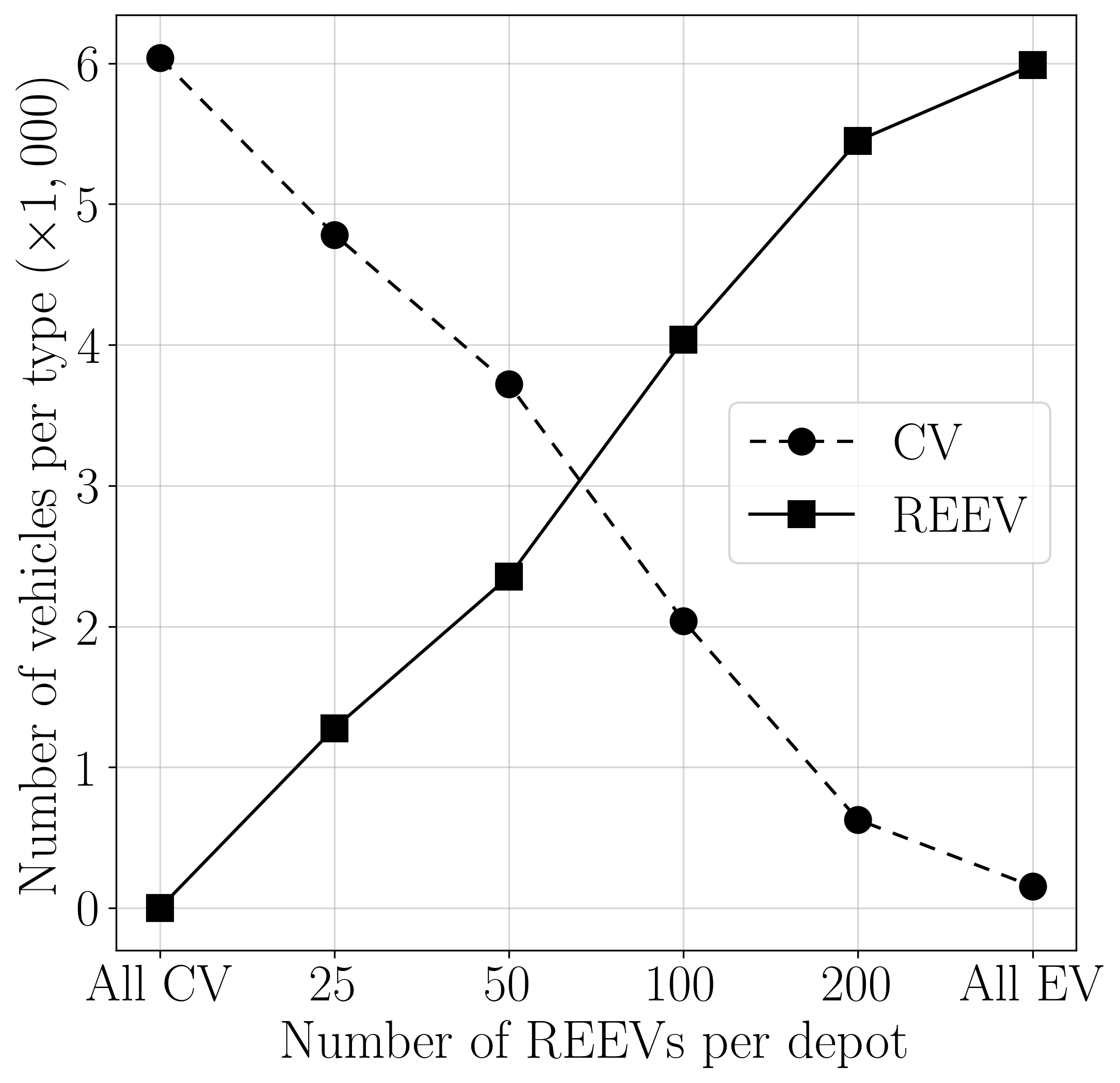}}
    \caption{Impact of EV deployment rate on cost, VMT, VHT, and fleet composition.} \label{fig:deploymentimpact_1}
\end{figure*}

\Cref{fig:deploymentimpact_1} summarizes the aggregate regional-level results.
First, \Cref{fig:Costgivendeployment} shows that the cost decrease relative to the All CV scenario, as a function of the number of EVs per depot.
We observe that using either REEVs or BEVs can substantially reduce operational costs by up to 67\%.
Perhaps more important, we see a difference of up to 11\% between the REEV and BEV cases in terms of cost reduction.
Since BEVs have higher infrastructure- and ownership-related fixed costs, REEVs may be preferable whenever the difference in fixed costs is higher than 11\%.
\Cref{fig:VMTgivendeployment} and \Cref{fig:VHTgivendeployment} report the regional-level VMT and VHT, respectively.
We observe that they exhibit a maximum when the number of EVs is roughly between 100 and 200, depending on the depot.
This is not unexpected since the All CV and All EV cases must travel roughly the same number of miles as a homogeneous fleet of vehicles and intermediate compositions must trade off between more EVs of cheaper travel cost and fewer CVs of higher costs.
This is supported %
by
\Cref{fig:VMTpermodegivendeploymentREEV} and \Cref{fig:VMTpermodegivendeploymentBEV},
which break down the VMT per drive mode in REEV and BEV cases, respectively.
We observe that as more EVs are deployed, the total EV and CNG miles increase but total CV miles decrease.
 \Cref{fig:NumvehgivendeploymentpertypeREEV} shows the breakdown of the overall regionwide fleet into REEVs and CVs.
Perhaps not surprising, more REEVs are utilized as they are made available.
The BEV case had the same trend with similar magnitudes, and we omit the corresponding figure.

The average regionwide total number of vehicles (regardless of type) across all scenarios was found to be 6,076 and was stable across scenarios.
The average number of packages delivered and VHT per vehicle were found to be 99.8 and 9.6 hours, respectively.
These statistics show that hours-of-work or duration limits were mostly binding whereas vehicle capacity utilization was roughly 83.2\% and hence mostly nonbinding.
The average VMT per vehicle was around 55.8 miles with a road travel speed of 35.8 mph, whereas the overall speed (VMT divided by VHT including service times) was 5.8 mph.
As a partial validation of our experimental design, we note that similar values have been reported for the average VMT (41.4 miles) of FedEx routes in the United States~\citep{barnitt2011fedex,feng2013economic}.
These statistics are crucial because they reveal potential areas of improvement on which manufacturers and policy makers can focus their attention.

\subsection{Impact of Extended Work Hours on Systemwide Performance Metrics}\label{sec:workhours}

By considering a baseline of 25 EVs per depot, we investigated the systemwide benefits of extended work hours in both the REEV and BEV cases. We focused on five scenarios, with $T$ ranging from 10 (set as a baseline) to 14 hours.
As we discussed in the preceding subsection, since vehicle routes are mostly constrained by their work hours (as opposed to their capacity), increasing driving time has an obvious positive impact on performance metrics. 

\Cref{fig:workhourimpact} illustrates the regionwide results.
First, \Cref{fig:Costgivenworkhours} shows a potential reduction in overall energy costs, of at least 17\% and ranging up to 32\%, compared with the All CV scenario.
Second, \Cref{fig:VMTgivenworkhours} and \Cref{fig:VHTgivenworkhours} illustrate that both VMT and VHT values decrease strongly as a function of increasing $T$.
Since we also observe a decline in the number of vehicles (see \Cref{fig:Numvehgivenworkhours}), it is evident that vehicle utilization increases with increasing $T$.
For example, when $T=14$ hours, the average number of deliveries increases to 115.9, which translates to roughly 96.6\% capacity utilization. \Cref{fig:VMTpermodegivenworkhoursREEV} and \Cref{fig:VMTpermodegivenworkhoursBEV} break down the VMT by drive mode.
We observe that as $T$ is increased, cheaper fuel options (i.e., electricity and CNG) are utilized slightly more while the expensive CV usage decreases.
Although not shown, we also observed that vehicle capacity constraints became binding for more routes as work hours were increased.

\begin{figure*}[!htb]
    \centering
    \subfloat[Cost\label{fig:Costgivenworkhours}]{%
        \includegraphics*[width=0.322\textwidth,height=\textheight,keepaspectratio]{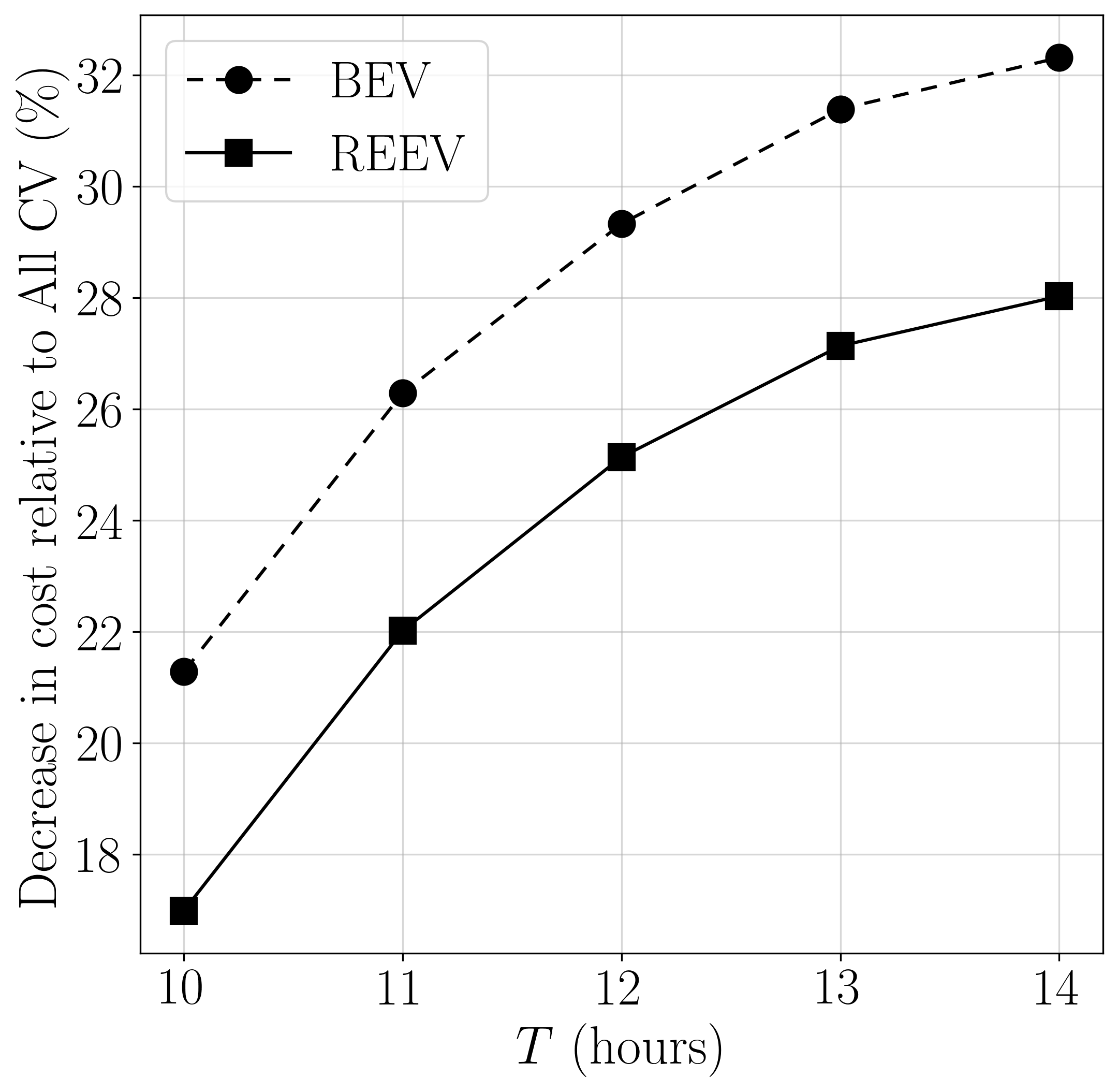}}
    \subfloat[VMT\label{fig:VMTgivenworkhours}]{%
        \includegraphics*[width=0.328\textwidth,height=\textheight,keepaspectratio]{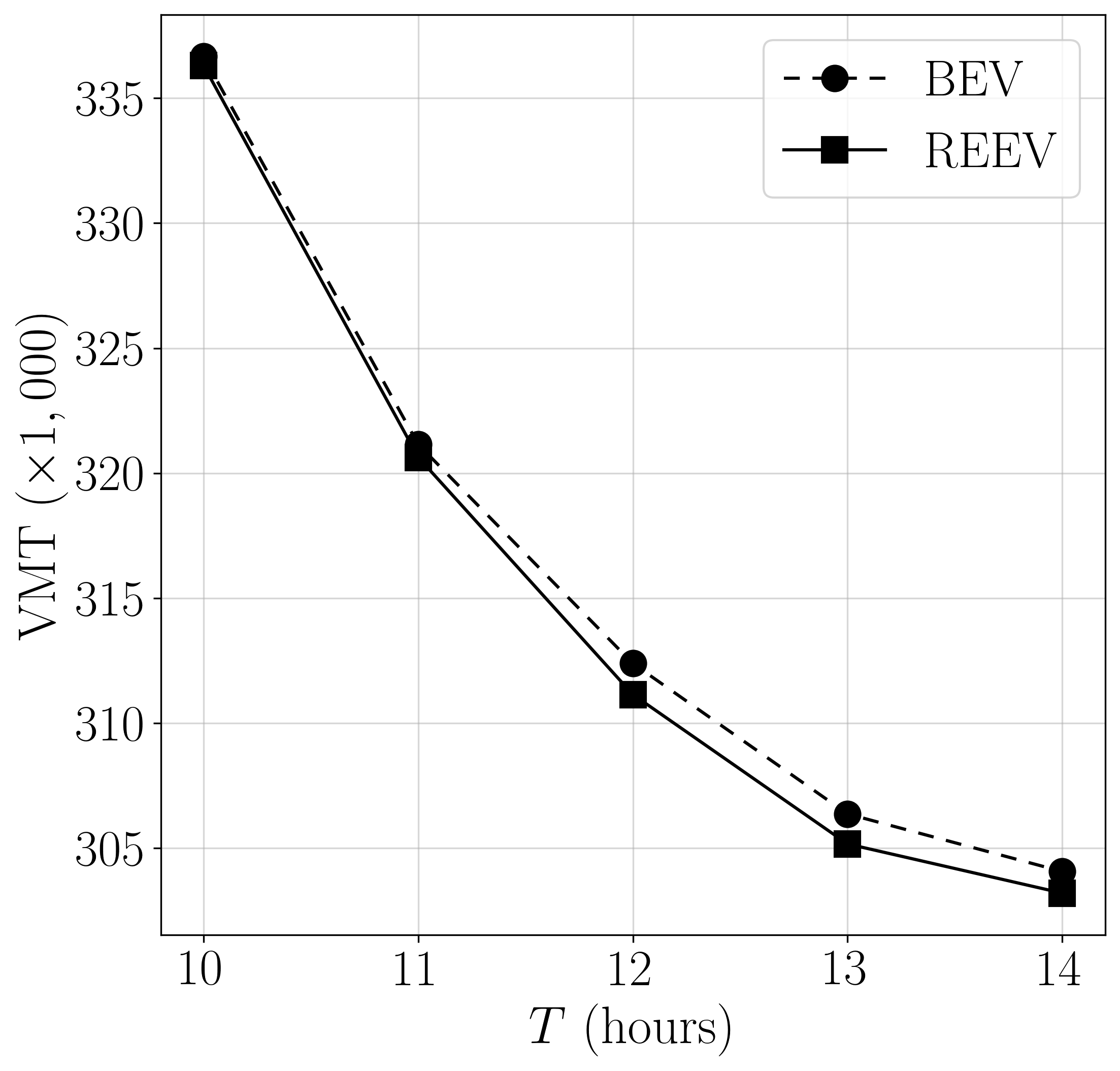}}
    \subfloat[VHT\label{fig:VHTgivenworkhours}]{%
        \includegraphics*[width=0.333\textwidth,height=\textheight,keepaspectratio]{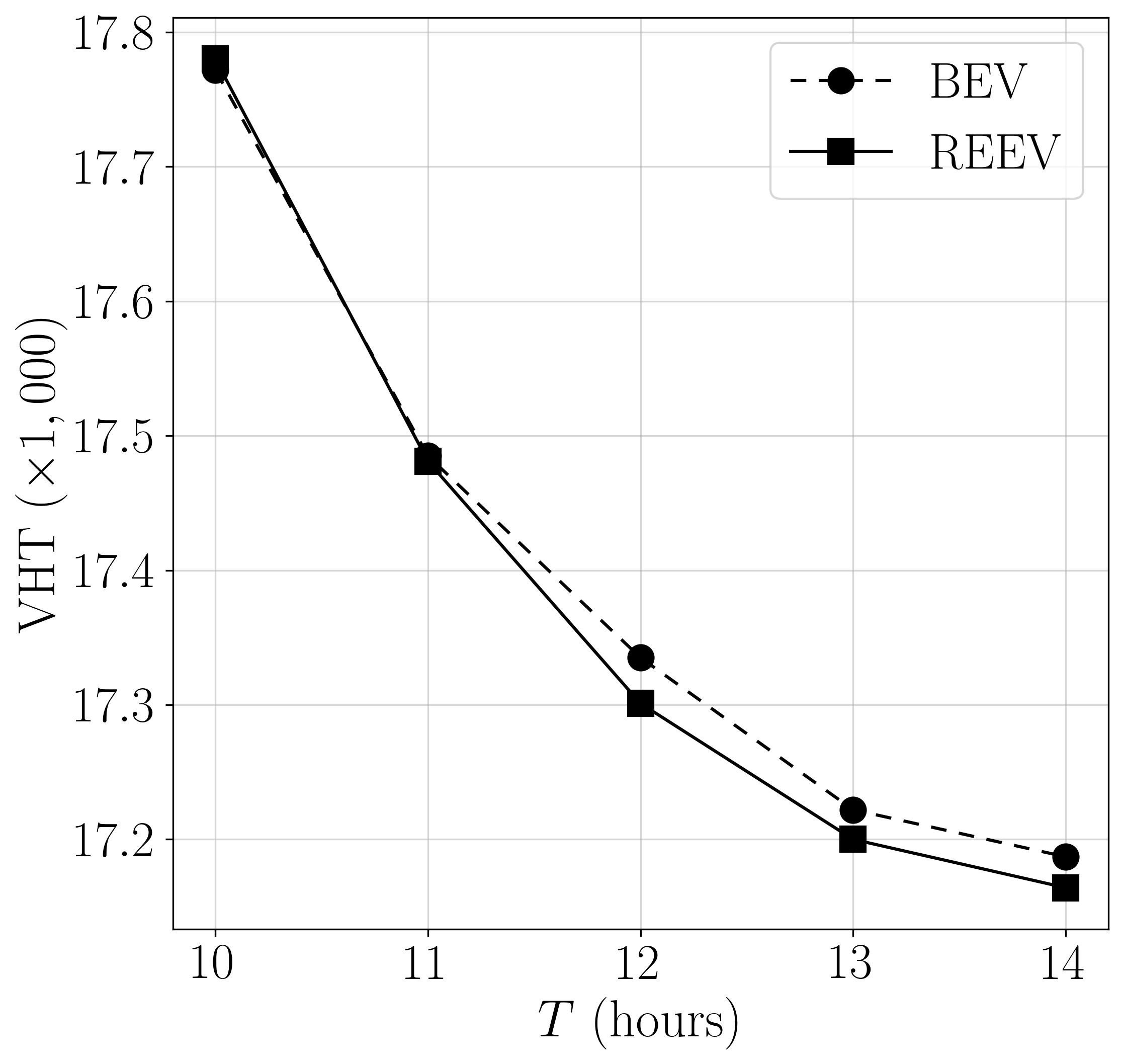}}\\
    \subfloat[VMT per mode in REEV case\label{fig:VMTpermodegivenworkhoursREEV}]{%
        \includegraphics*[width=0.33\textwidth,height=\textheight,keepaspectratio]{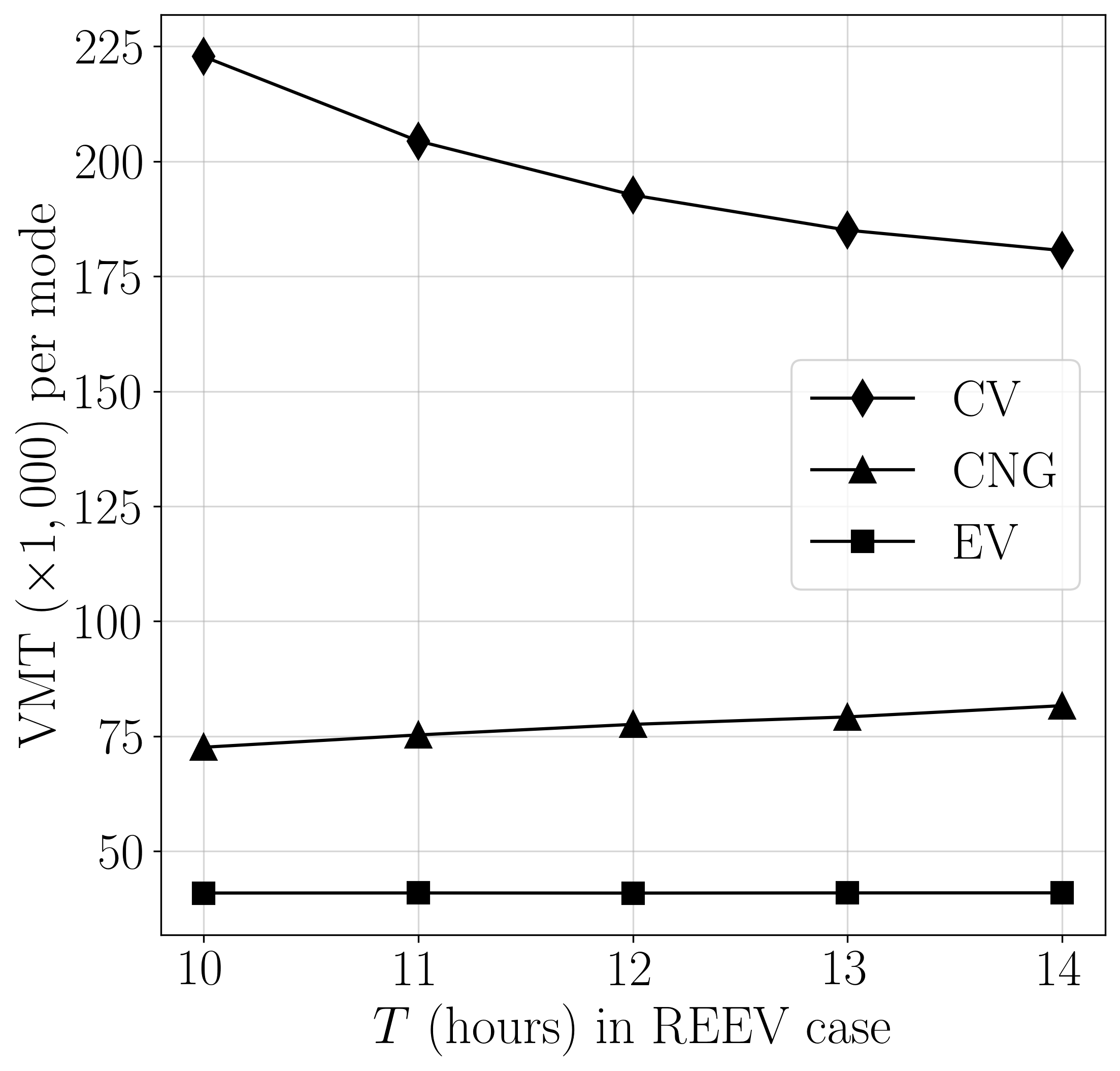}}
    \subfloat[VMT per mode in BEV case\label{fig:VMTpermodegivenworkhoursBEV}]{%
        \includegraphics*[width=0.33\textwidth,height=\textheight,keepaspectratio]{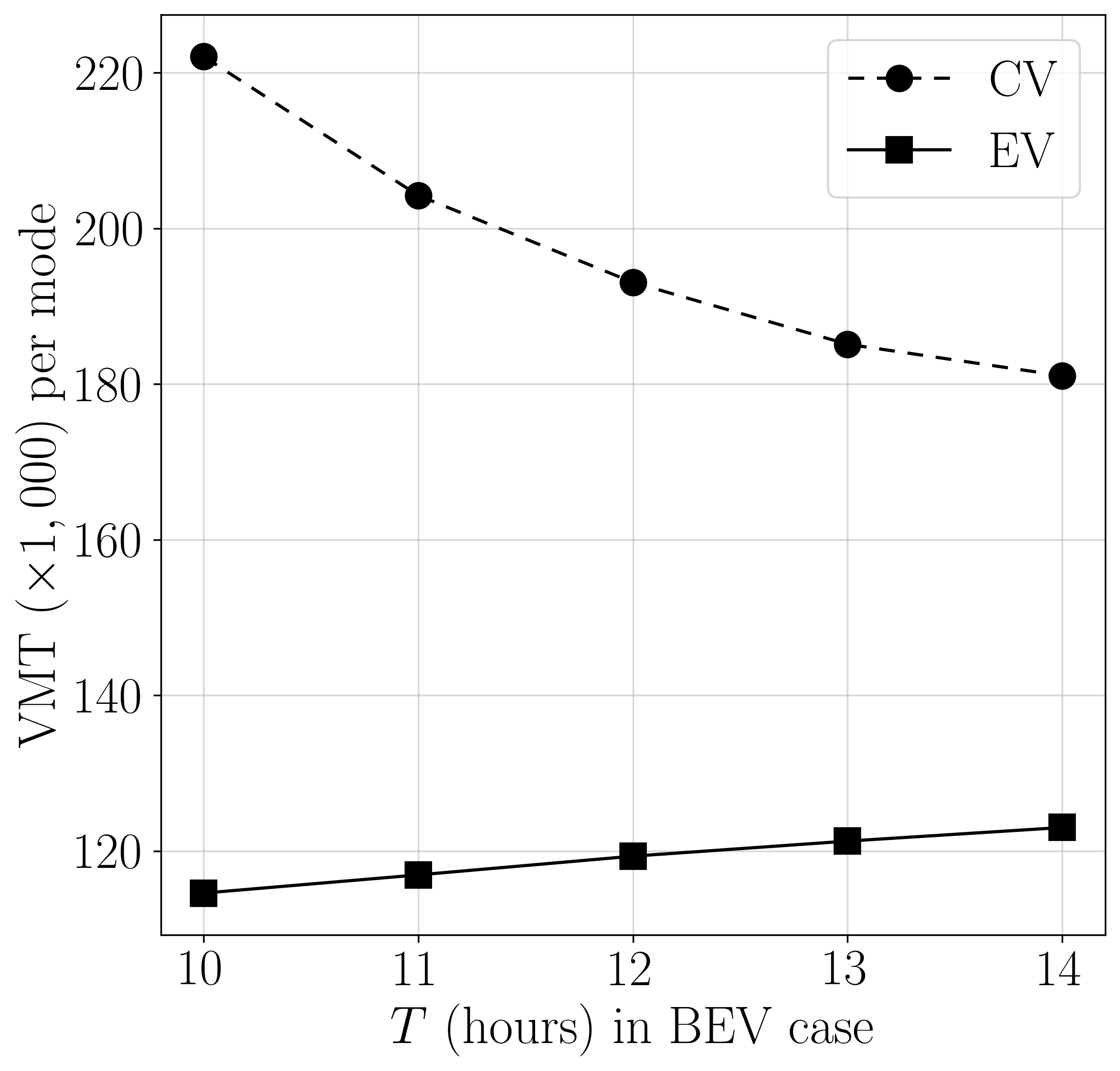}}
    \subfloat[Number of vehicles per type\label{fig:Numvehgivenworkhours}]{%
        \includegraphics*[width=0.327\textwidth,height=\textheight,keepaspectratio]{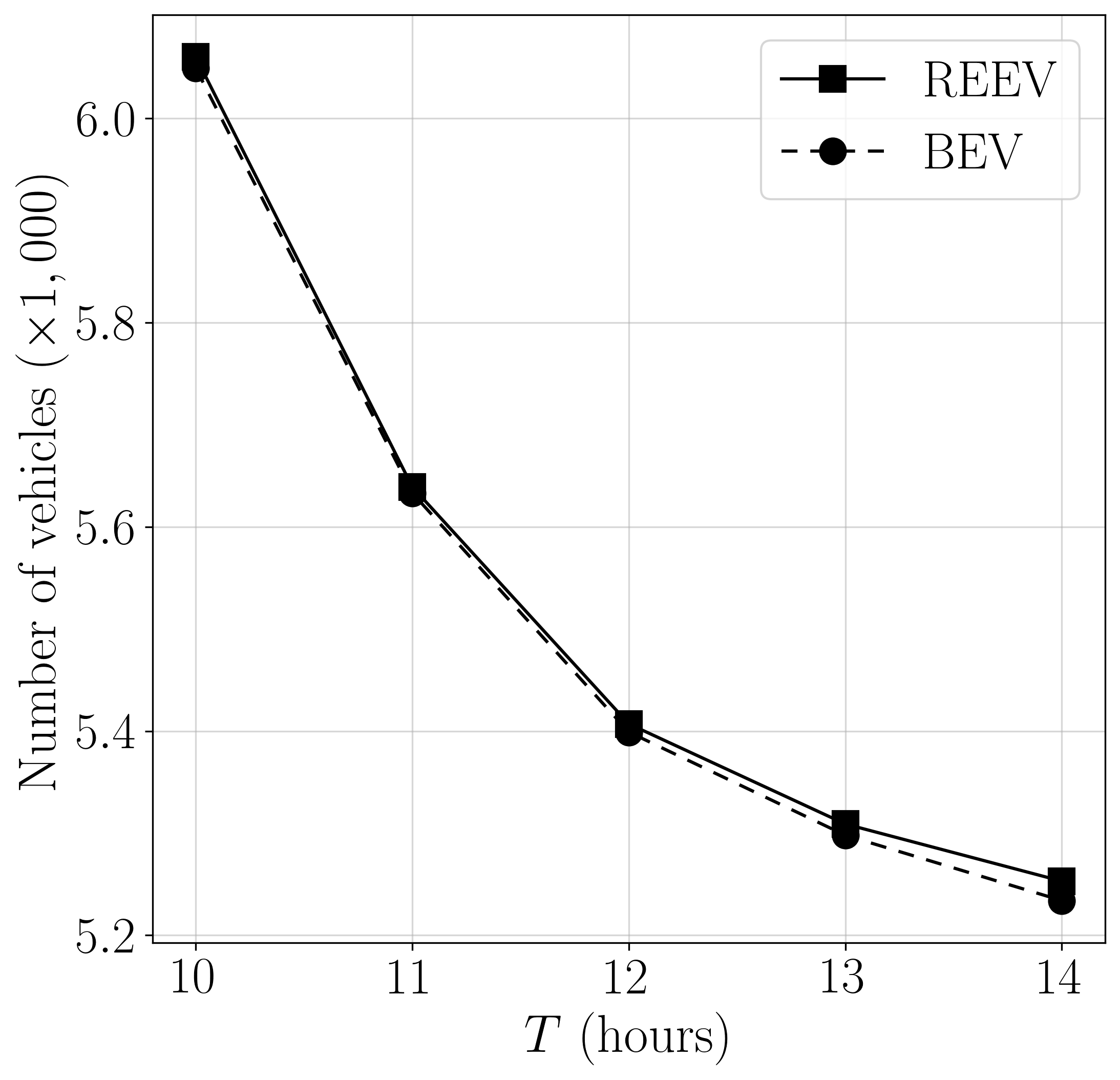}}
    \caption{Impact of extended work hours on cost, VMT, VHT, and fleet composition.} \label{fig:workhourimpact}
\end{figure*}

\subsection{Impact of EV Range on Systemwide Performance Metrics}\label{sec:EVrange}

Here we  investigate the impact of increasing the EV range in both the REEV and BEV cases.
In the REEV case we consider $D_E \in \{33, 40, 60, 80, 100, 125, 150\}$ miles, whereas in the BEV case we consider $D_E\in\{100, 125, 150, 200, 250\}$ miles, where the lower limit of 100 miles is inspired by recent news reports of \citet{Abt,Kane}. 

\Cref{fig:rangeimpact} summarizes the relevant findings.
\Cref{fig:Costgivenrange} reports cost decreases relative to the All CV scenario.
We observe that the cost decrease plateaus in the BEV case after $D_E = 200$ miles. %
This indicates that  very few vehicle routes  require traveling more than 200 miles.
Moreover, \Cref{fig:VMTpermodegivenrangeREEV} and \Cref{fig:VMTpermodegivenrangeBEV} show that the total VMT (summed across conventional and EV modes) do not change with increasing $D_E$; indeed, the average VMT per vehicle remains 55.8 miles across all $D_E$ values, similar to \Cref{sec:deployment}.
Therefore, the cost decrease cannot be attributed to a decrease in VMT but rather to a shift from fossil fuel to electricity.
This highlights a key takeaway: Although e-commerce LSPs can expect operational cost reductions of roughly 18\% by adopting REEVs with an EV range of up to $D_E = 50$ miles, doubling this range further to (very optimistic) values beyond 100 miles will lead to further reductions of at most 5\%.
Indeed, the average VMT of roughly $56$ miles can serve as a good design principle to guide investment in more efficient hybrid powertrains.
We note that it is difficult to draw a fair comparison between the REEV and BEV cases, given significant differences in their powertrain technologies and EV ranges, as well as fixed investment and ownership-related costs.

\begin{figure*}[!htb]
    \centering
    \subfloat[Cost.\label{fig:Costgivenrange}]{%
        \includegraphics*[width=0.3\textwidth,height=\textheight,keepaspectratio]{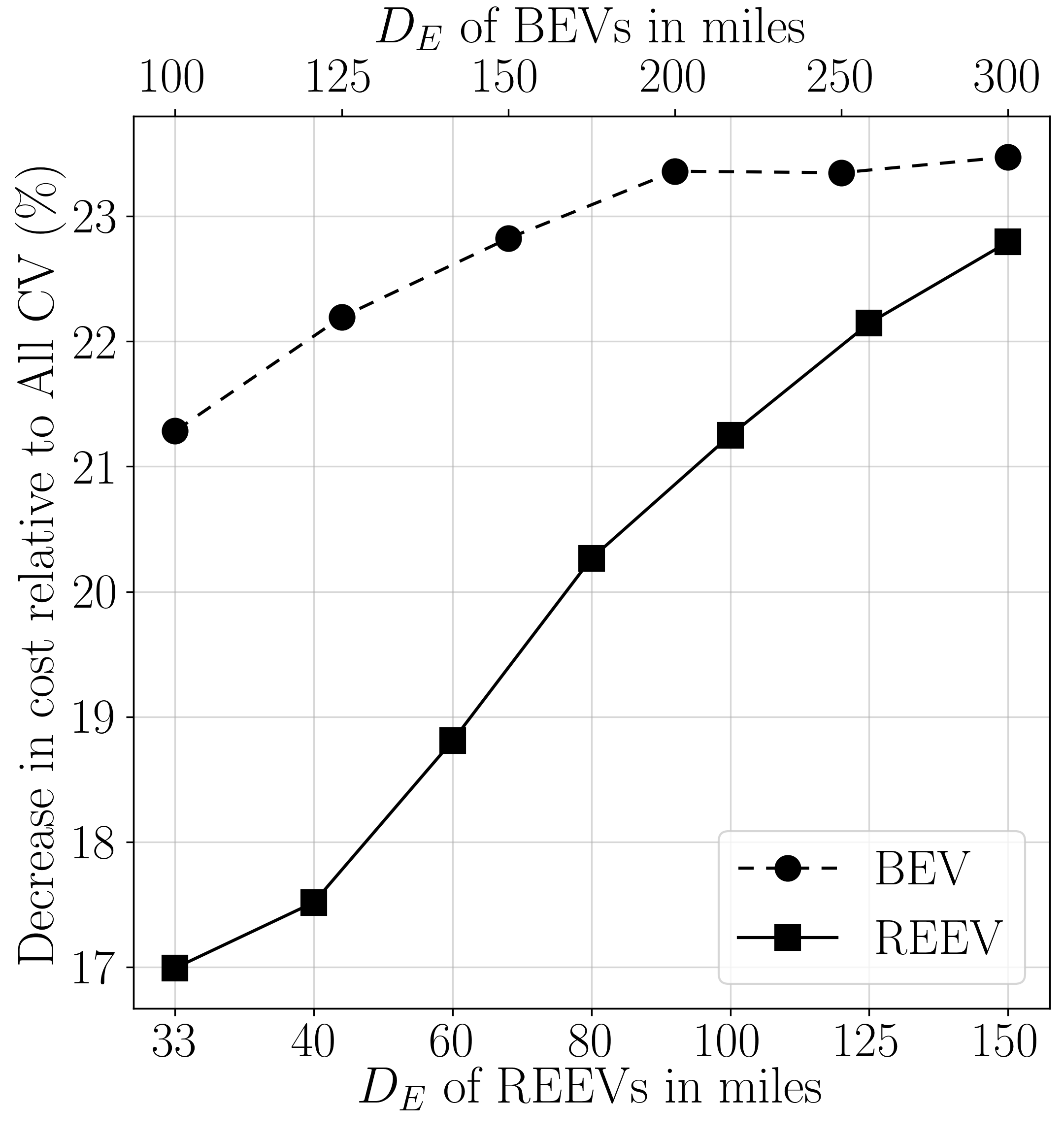}}
    \subfloat[VMT per mode in REEVs.\label{fig:VMTpermodegivenrangeREEV}]{%
        \includegraphics*[width=0.31\textwidth,height=\textheight,keepaspectratio]{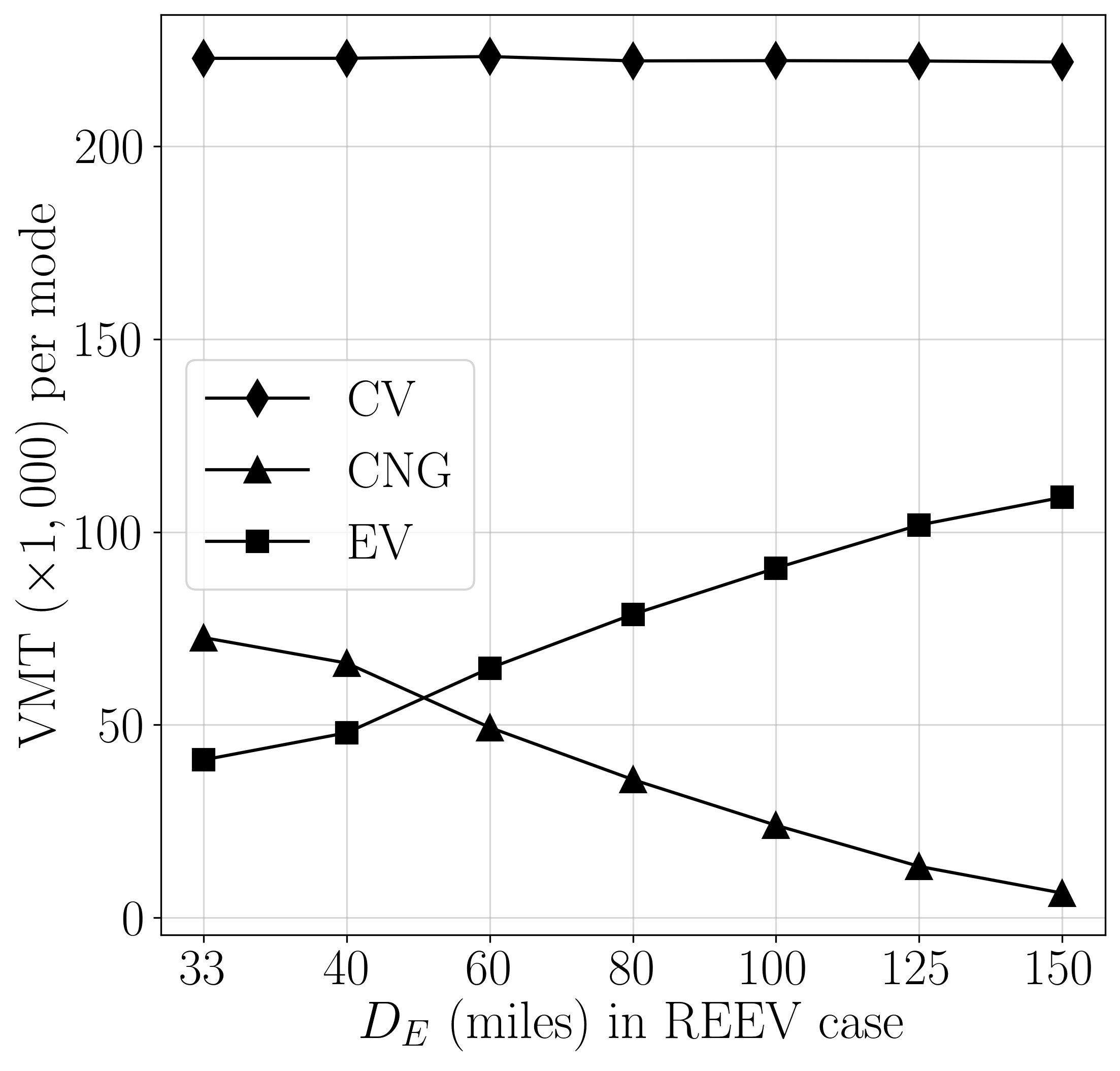}}
    \subfloat[
    VMT per mode in BEVs.\label{fig:VMTpermodegivenrangeBEV}]{%
        \includegraphics*[width=0.37\textwidth,height=\textheight,keepaspectratio]{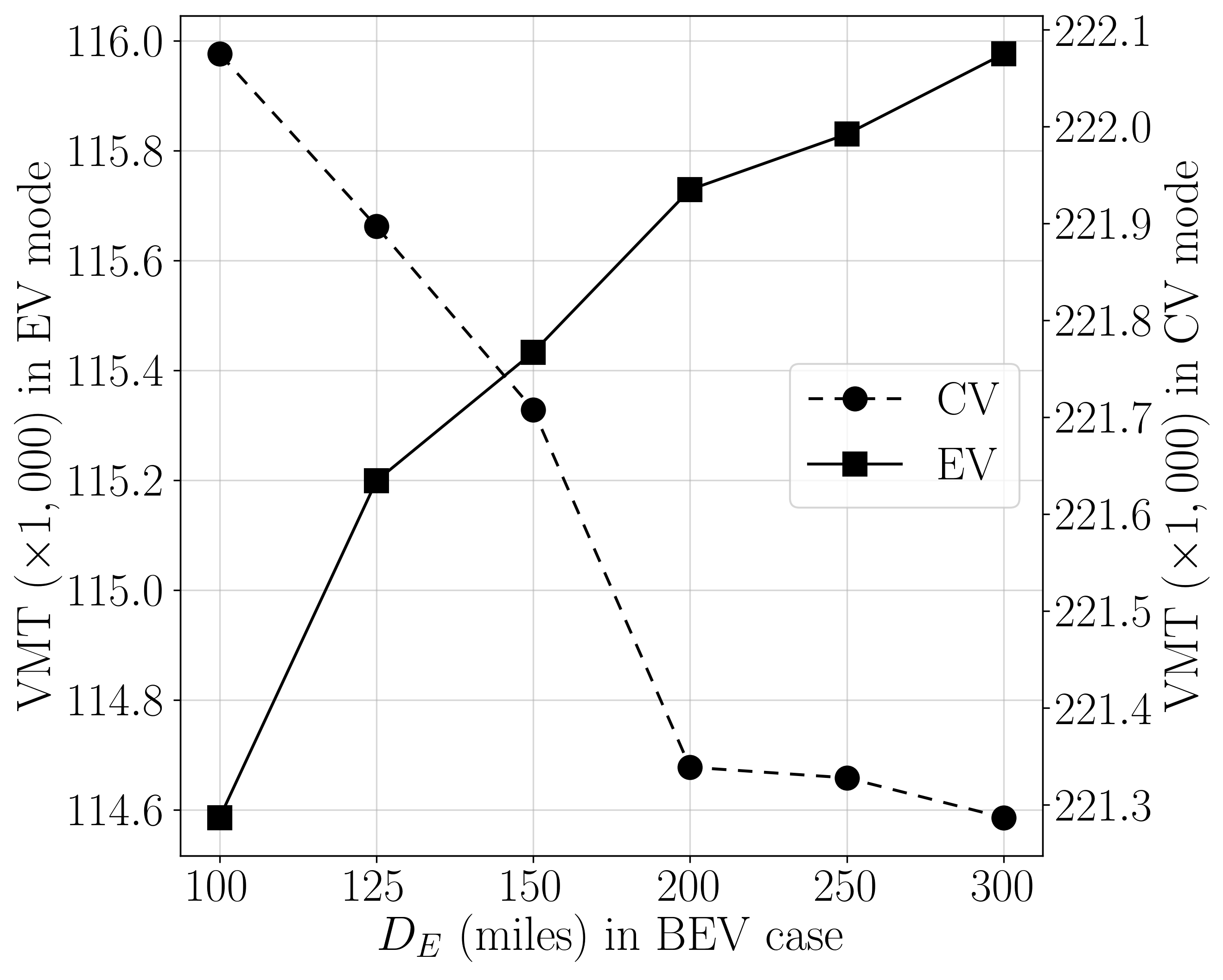}}
    \caption{Impact of extended EV range on cost and VMT.} \label{fig:rangeimpact}
\end{figure*}

\subsection{Impact of Vehicle Capacities on Systemwide Performance Metrics}\label{sec:capacity}

Compared with the baseline value, we allowed higher vehicle capacities of $Q\in\{150, 180, 210, 240\}$ packages in both the REEV and BEV cases.
However, these did not have any significant impacts.
Indeed, as noted in \Cref{sec:deployment}, the average number of packages delivered was roughly 99.8.
Therefore, increasing vehicle capacities beyond $120$ packages is not expected to yield any benefits.
Nonetheless, since the routes are mostly time-limited, it is possible that increasing vehicle capacity while simultaneously reducing the average service time to $\sigma = 2$ minutes (say) could potentially yield cost savings.
We investigate this case  next.

\subsection{Impact of Service Time on Systemwide Performance mMtrics}\label{sec:service_time}

The choice of the service time is an important parameter that dictates the average route lengths and the number of customers served.
We investigate $\sigma\in\{0, 1, 2, 3, 4, 5\}$ minutes focusing only on the REEV case.
Note that very low service times are difficult to achieve in practice;  we consider them only to establish theoretical upper bounds on savings.
\Cref{fig:service_time} summarizes the performance metrics results for these scenarios, relative to a baseline of $\sigma=5$ minutes.
\Cref{fig:Costgivensigma} and \Cref{fig:Numvehgivensigma} show that service time alone contributes to roughly 26\% of the overall costs and 29.2\% of the fleet size in the baseline scenario (compare with $\sigma = 0$).
\Cref{fig:VMTgivensigma} and \Cref{fig:VHTgivensigma} indicate that VMT and VHT are also significantly impacted by service times.
Whereas the total VHT increases roughly linearly with $\sigma$, \Cref{fig:VHTonmajorgivensigma} shows that the VHT on major roads (which we define as travel between superlocations) increases almost exponentially with $\sigma$.
 \Cref{fig:Numvehpertypegivensigma} shows that the number of REEVs remains fairly constant across the range of $\sigma$ values (since all available REEVs are used in each scenario), whereas the number of CVs increases sharply to respond to service time increases.

\begin{figure*}[!htb]
    \centering
    \subfloat[Cost\label{fig:Costgivensigma}]{%
        \includegraphics*[width=0.33\textwidth,height=\textheight,keepaspectratio]{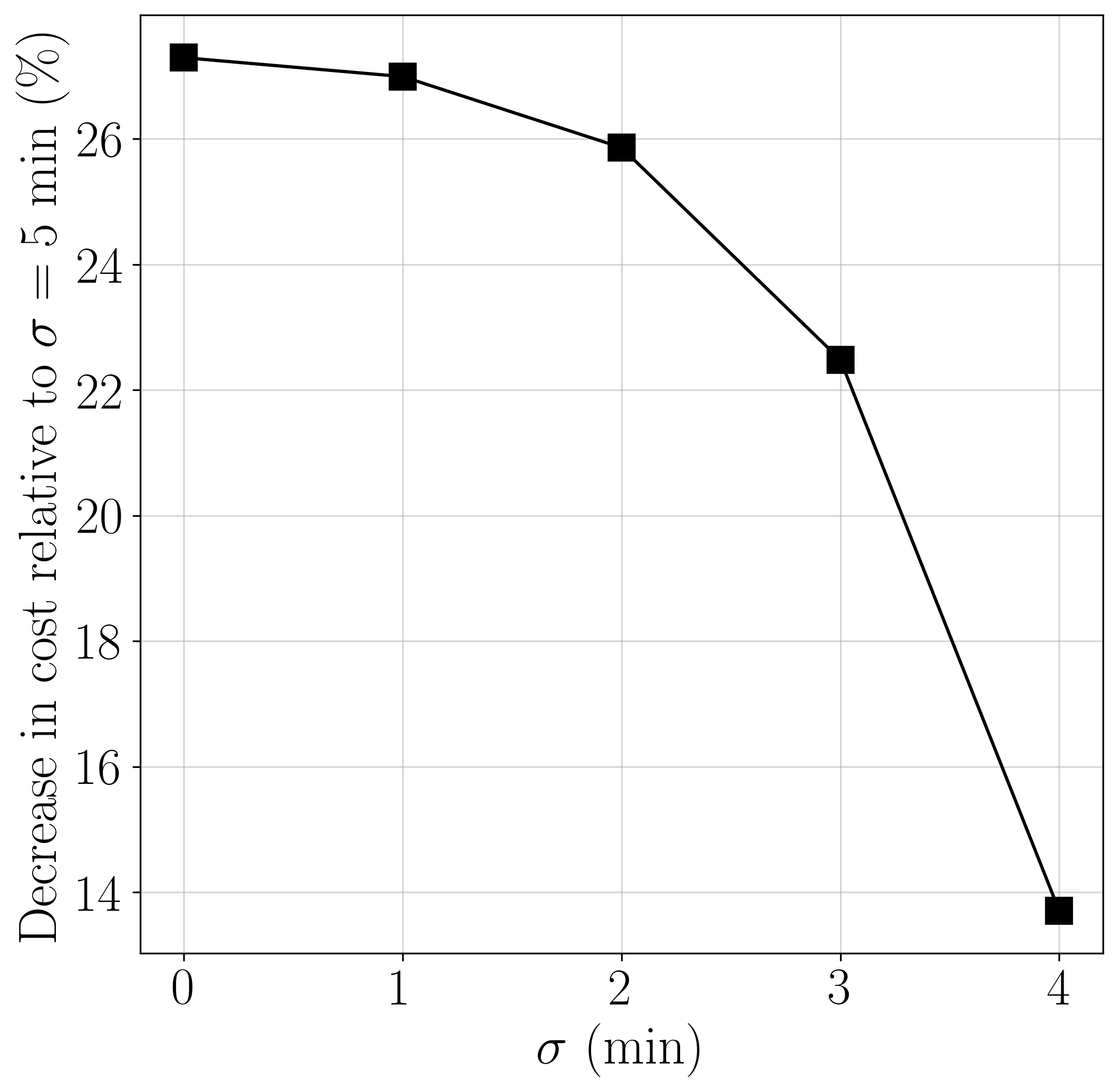}}
    \subfloat[VMT\label{fig:VMTgivensigma}]{%
        \includegraphics*[width=0.337\textwidth,height=\textheight,keepaspectratio]{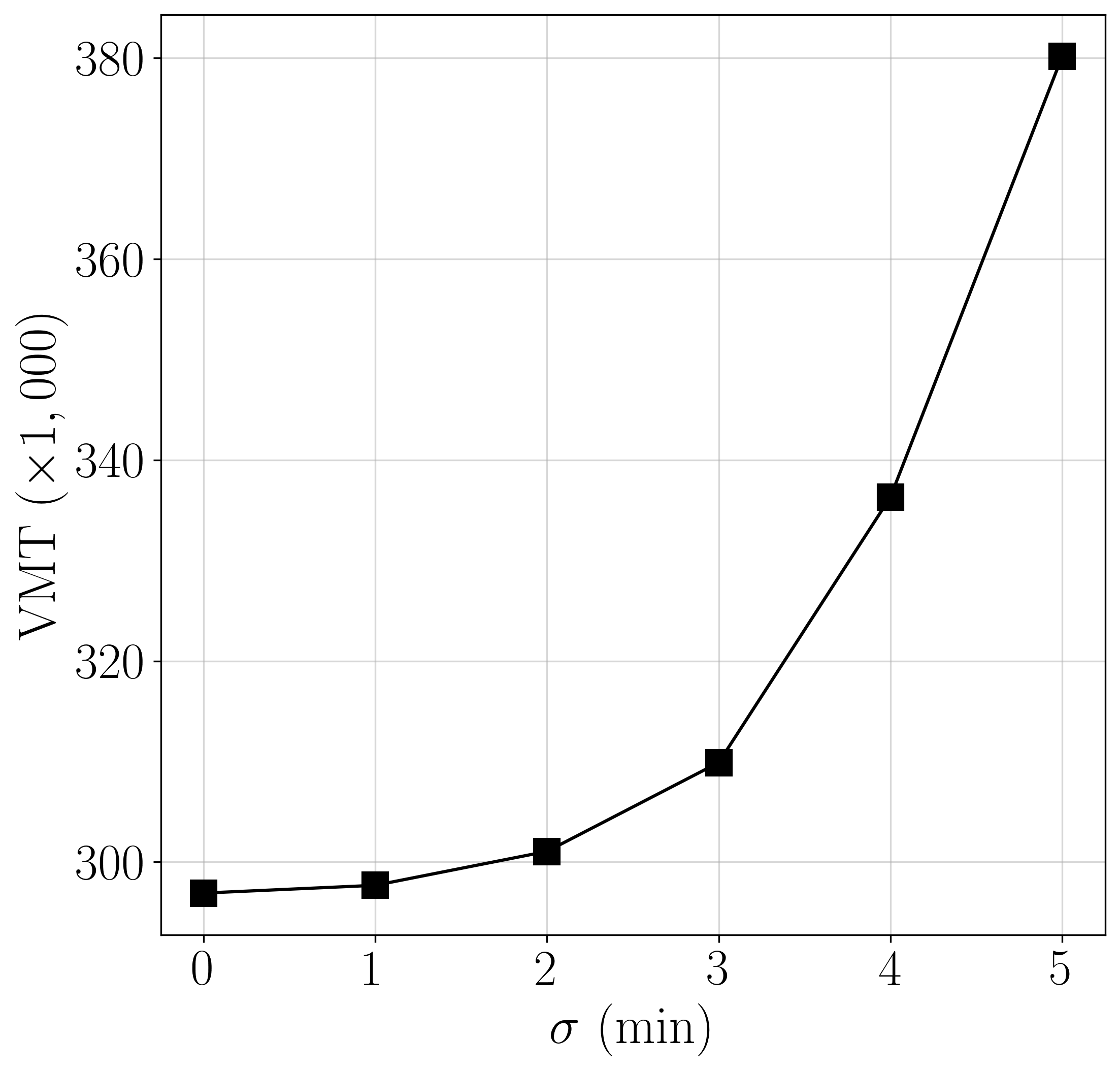}}
    \subfloat[VHT\label{fig:VHTgivensigma}]{%
        \includegraphics*[width=0.328\textwidth,height=\textheight,keepaspectratio]{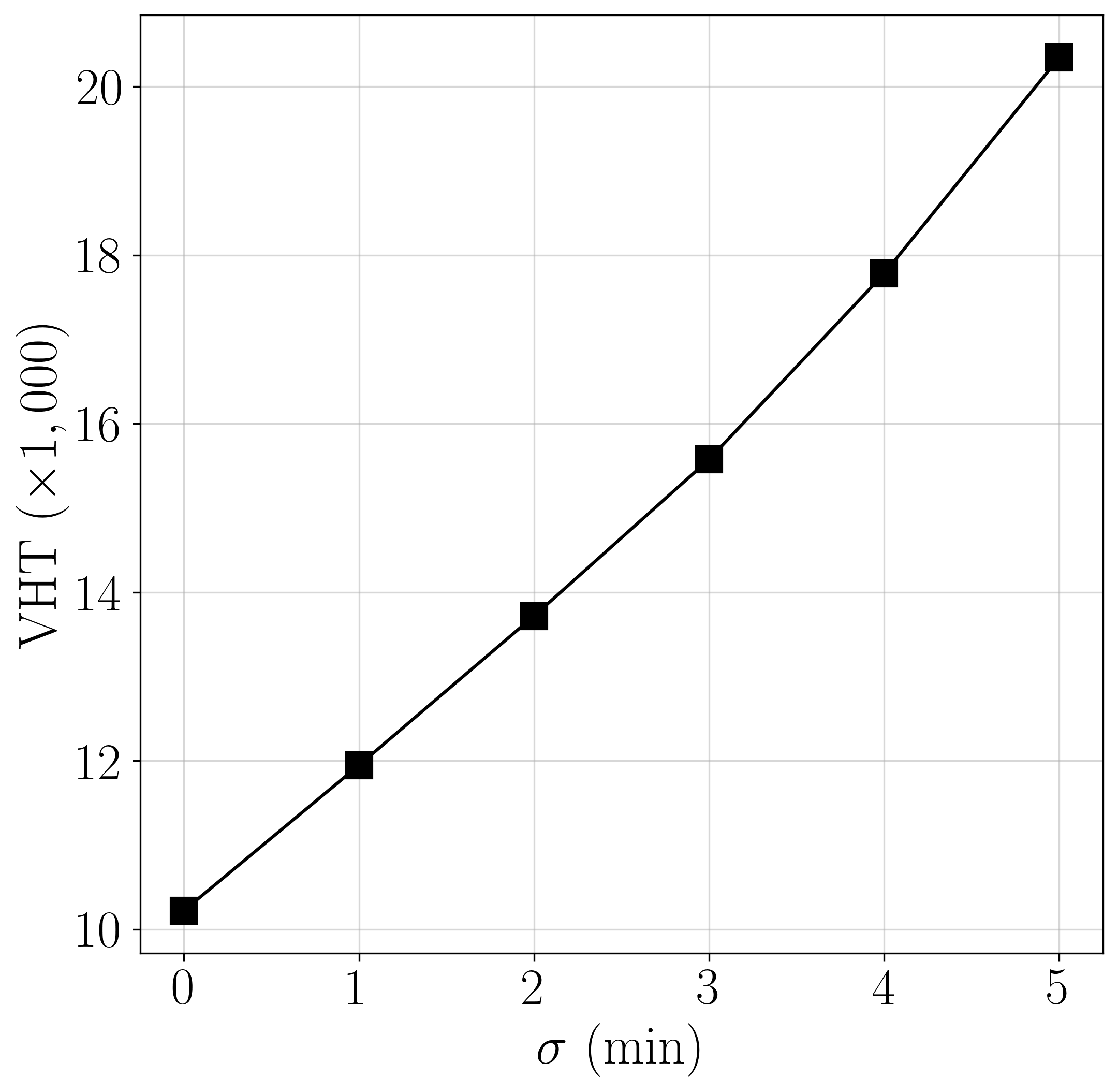}}\\
    \subfloat[VHT (major roads only)\label{fig:VHTonmajorgivensigma}]{%
        \includegraphics*[width=0.33\textwidth,height=\textheight,keepaspectratio]{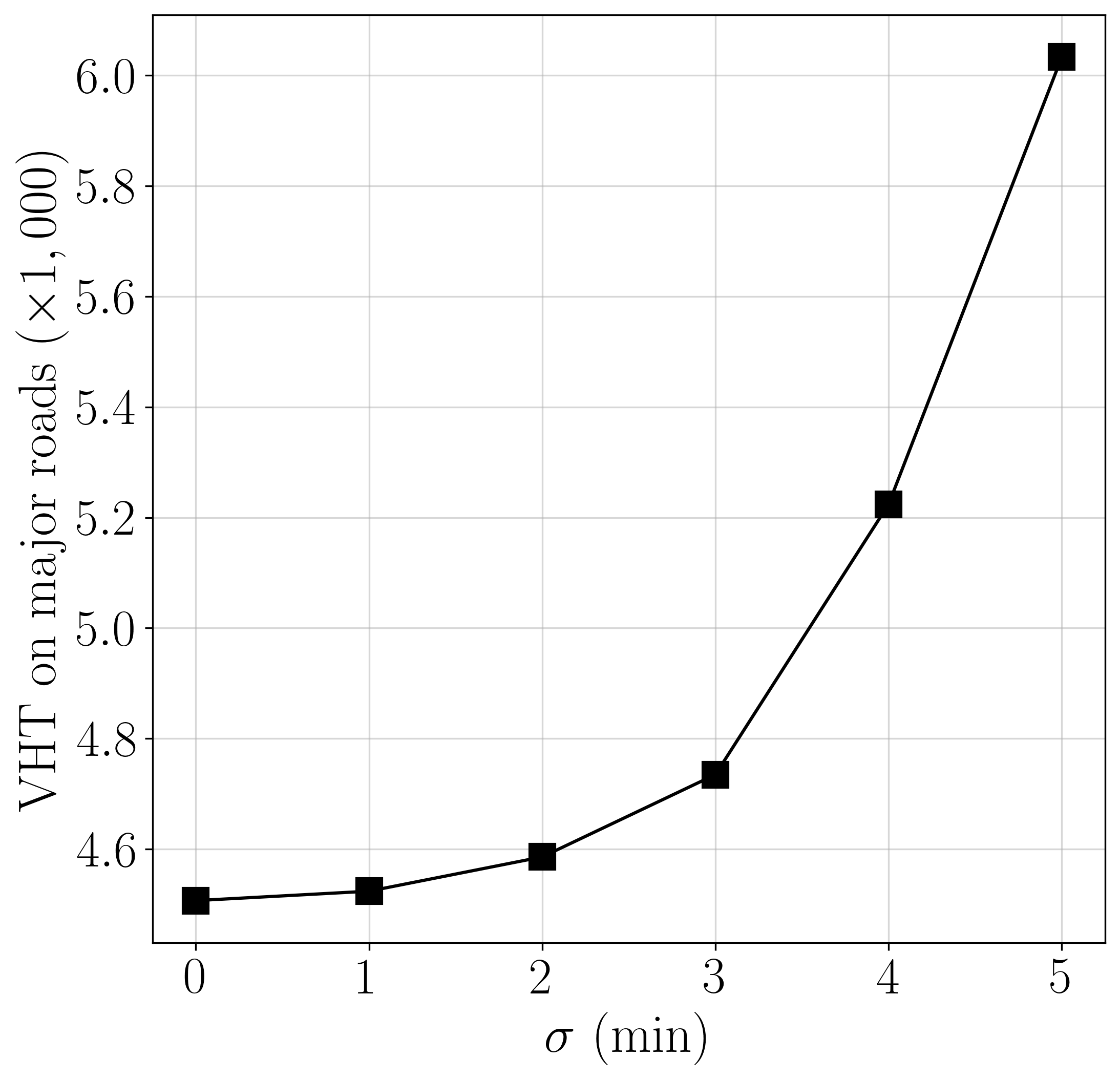}}
    \subfloat[Number of vehicles\label{fig:Numvehgivensigma}]{%
        \includegraphics*[width=0.33\textwidth,height=\textheight,keepaspectratio]{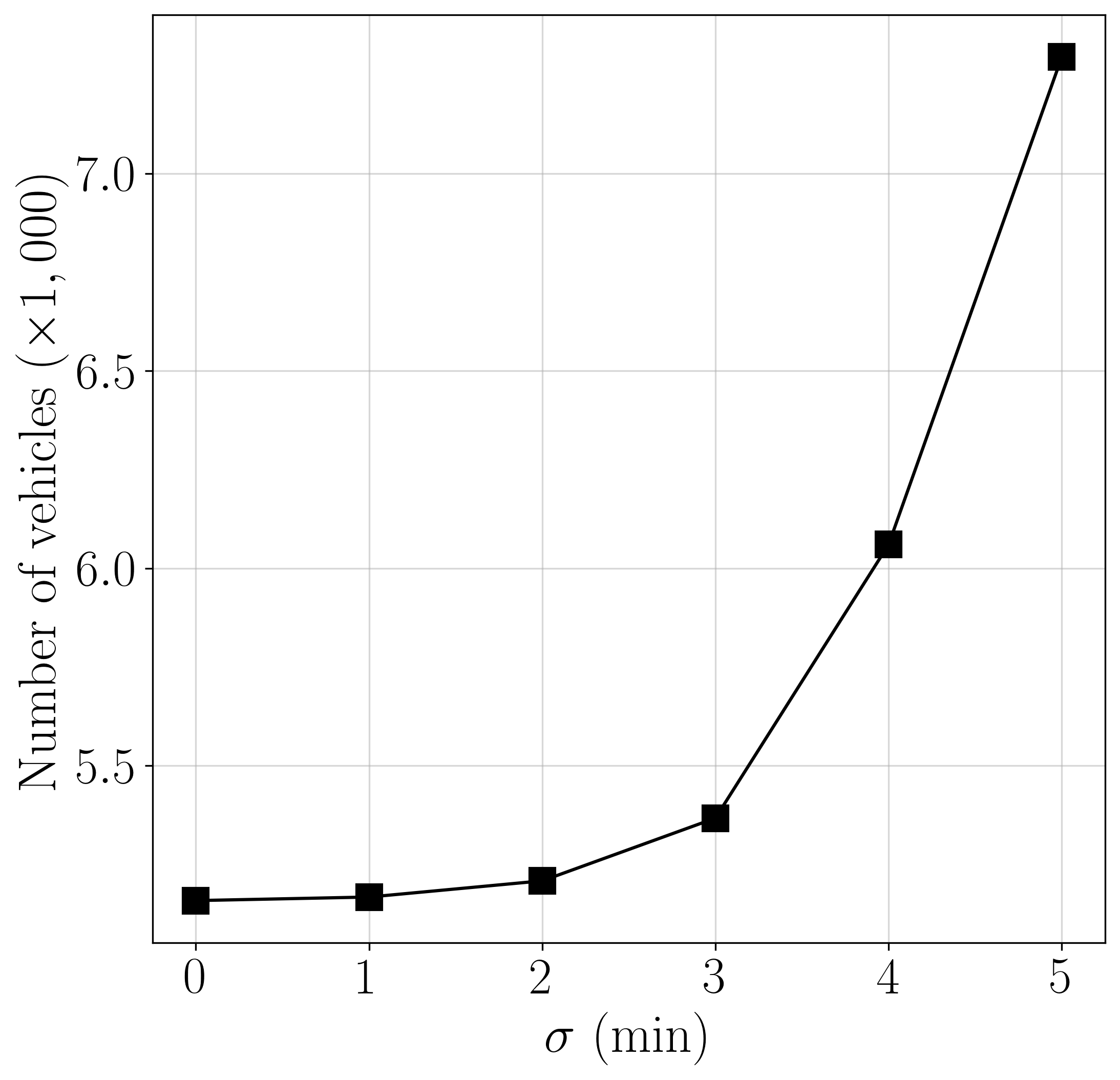}}
    \subfloat[Number of vehicles per type\label{fig:Numvehpertypegivensigma}]{%
        \includegraphics*[width=0.327\textwidth,height=\textheight,keepaspectratio]{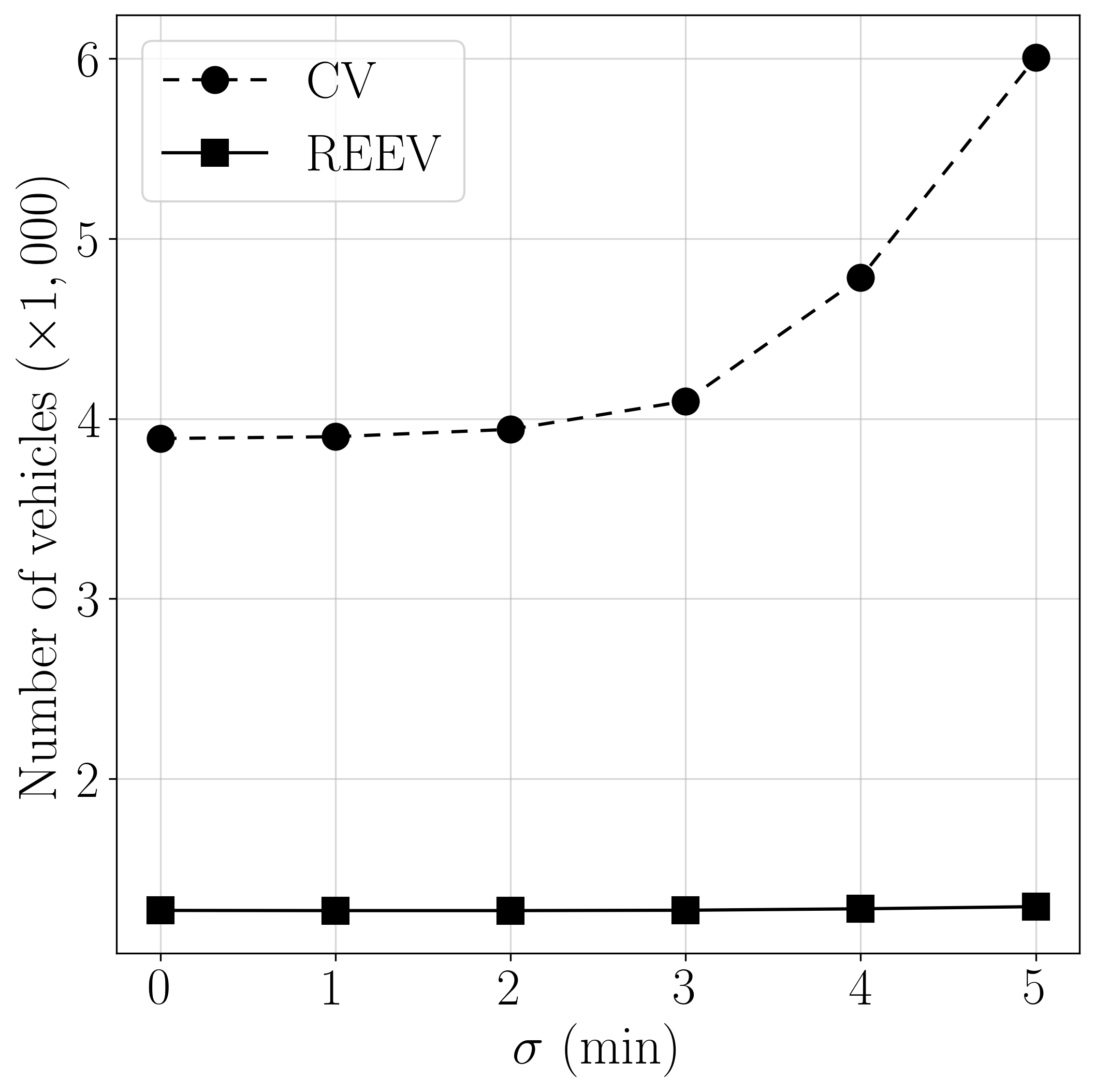}}
    \caption{Impact of service time on cost, VMT, VHT, and fleet composition.} \label{fig:service_time}
\end{figure*}

\section{Conclusions}\label{sec:conclusions}

Range extenders have an important role to play in striking the right balance between the federal government's goal of increasing EV adoption for e-commerce distribution on the one hand and logistics service providers' requirements of achieving operational cost savings on the other.
The reason is that by extending the driving range of traditional battery-operated vehicles, they combine the higher efficiency and environmental benefits provided by the latter with the longer range and autonomy of conventional fossil-fuel-powered vehicles.
This combination makes them particularly suited for e-commerce deliveries in dense urban areas, where battery recharging along routes can be too time-consuming to economically justify the use of all-electric vehicles.

This paper investigated the impacts of adopting REEV technology in a large-scale real-world case study concerning parcel deliveries in the Chicago metropolitan area.
With a goal to quantify various operational metrics including energy costs, vehicle miles traveled in electric and fossil fuel modes, vehicle hours traveled, and their capacity utilization, we defined a new VRP varian---called the range-extended electric vehicle routing problem, or REEEVRP---that simultaneously determines the optimal fleet composition and truck routes for a mixed fleet of REEVs and CVs.
Unlike other heterogeneous VRP variants, the REEVRP features heterogeneity in alternative propulsion modes within the same vehicle type.
To solve the REEVRP, we proposed an exact BPC algorithm and an ITS metaheuristic.
The latter is capable of providing solutions for the large-scale instances in our study featuring  more than 3,000 delivery locations, whereas the former provides near-optimal solutions (roughly 5\% on average) for small-scale instances.
We found that distributors can expect energy cost savings of up to 17\% while incurring less than 0.5\% increase in VMT and VHT by deploying only a small number of REEVs with a modest EV range of 33 miles.
Moreover, since the routes are time-limited, doubling the EV range to 60 miles further reduced costs by only 4\%, something that can also be achieved by decreasing the average service time by 1~minute or increasing driver working time by 1~hour.
Our case study also revealed other potential areas of improvement to which manufacturers, logistics service providers, and policy makers can focus their attention.

Future directions of study include
the design of a more efficient exact BPC algorithm (for example, where the limited EV range of REEVs are directly handled as part of the pricing routine); 
the modeling of time-dependent travel times and intraroute power management decisions;
and sensitivity analyses of parameters that our work did not consider, such as cost coefficients as well as neighborhood-level travel times and distances.

\section*{Acknowledgments}
This material is based upon work supported by the U.S.~Department of Energy,
Office of Science, under contract number DE-AC02-06CH11357.

\bibliographystyle{unsrtnat}
\bibliography{refs}

\vfill
\center{
\framebox{\parbox{0.98\linewidth}{
\footnotesize
The submitted manuscript has been created by UChicago Argonne, LLC, Operator of 
Argonne National Laboratory (``Argonne''). Argonne, a U.S.\ Department of 
Energy 
Office of Science laboratory, is operated under Contract No.\ 
DE-AC02-06CH11357. 
The U.S.\ Government retains for itself, and others acting on its behalf, a 
paid-up nonexclusive, irrevocable worldwide license in said article to 
reproduce, prepare derivative works, distribute copies to the public, and 
perform publicly and display publicly, by or on behalf of the Government.  The 
Department of Energy will provide public access to these results of federally 
sponsored research in accordance with the DOE Public Access Plan. 
http://energy.gov/downloads/doe-public-access-plan.}}}

\end{document}